\author{Thomas K\"{o}ppe}
\title{Computations of instanton invariants}
\date{}
\numberwithin{equation}{section}
\newtheorem{proposition}{Proposition}[section]
\newtheorem*{thm.ff}{Theorem on Formal Functions}
\theoremstyle{definition}
\newtheorem{definition}[proposition]{Definition}
\newtheorem{example}[proposition]{Example}
\newtheorem{remark}[proposition]{Remark}
\theoremstyle{remark}
\DeclareMathOperator{\Ext}{Ext}
\DeclareMathOperator{\Hom}{Hom}
\DeclareMathOperator{\SEnd}{\mathcal{E}\!\mathit{nd}}
\DeclareMathOperator{\coker}{coker}
\DeclareMathOperator{\ev}{ev}
\DeclareMathOperator{\Tot}{Tot}
\newcommand{\op}{\mathcal{O}_{\mathbb{P}^1}}
\newcommand{\minU}{\mathrm{min}_u}
\newcommand{\maxU}{\mathrm{max}_u}
\newcommand{\minZ}{\mathrm{min}_z}
\newcommand{\maxZ}{\mathrm{max}_z}
\newcommand{\ce}{\mathrel{\mathop:}=}
\newcommand{\ec}{=\mathrel{\mathop:}}
\newcommand{\eqand}[1][and]{\qquad\text{#1}\qquad}
\newcommand{\mypm}[4]{\begin{pmatrix}#1\\#2\\#3\\#4\end{pmatrix}}
\newcommand{\mysm}[2]{\bigl(\begin{smallmatrix}#1\\#2\end{smallmatrix}\bigr)}
\newcommand{\oton}[2][n]{#2_1, \dotsc, #2_{#1}}
\newcommand{\vfix}{\vphantom{stfj\minU\beta}}
\begin{document}

\maketitle

\begin{abstract}\noindent
Motivated by newly discovered properties of instantons on non-compact
spaces, we realised that certain analytic invariants of vector bundles
detect fine geometric properties. We present numerical algorithms,
implemented in \emph{Macaulay~2}, to compute these invariants.

Precisely, we obtain the direct image and first derived functor of the
contraction map $\pi \colon Z \to X$, where $Z$ is the total space of
a negative bundle over $\mathbb{P}^1$ and $\pi$ contracts the zero
section. We obtain two numerical invariants of a rank-$2$ vector
bundle $E$ on $Z$, the width $h^0\bigl(X; \; (\pi_*E)^{\vee \vee}
\bigl/ \pi_*E\bigr)$ and the height $h^0\bigl(X; \; R^1 \pi_*E
\bigr)$, whose sum is the local holomorphic Euler characteristic
$\chi^\text{loc}(E)$.
\end{abstract}

\tableofcontents

\section{Introduction}

In this paper we present effective algorithms, implemented in
\emph{Macaulay~2}, for the computation of two numerical invariants of
locally free sheaves of rank $2$ and with $c_1=0$ on a family of open
complex surfaces $Z_k$ which contain a distinguished line $\ell$ of
self-intersection $\ell^2=-k$, $k>0$. The interest in these sheaves
arises from mathematical physics, since the Kobayashi-Hitchin
correspondence identifies a certain subset of these sheaves with
instantons on $Z_k$, and in this picture our two numerical invariants
add up to the \emph{local charge} of the instanton near the line
$\ell$. However, the invariants are strictly finer than the charge,
and they apply to a larger class of sheaves than just those which
correspond to instantons, and they provide a way to stratify the
moduli of $\mathfrak{sl}_2$-bundles on $Z_k$ into ``nice'' components.

The mathematical theory behind these sheaves and their relation to
physics has been studied in \cite{gkm}, and the study of their moduli
is the subject of \cite{bgk1} and \cite{bgk2}. The explicit
computation of the numerical invariants has been an essential
ingredient of several of the results in those papers, for the proof of
which one used ``direct computation''. It is the aim of this paper to
describe general algorithms for these direct computations. A reference
implementation in \emph{Macaulay~2} can be found on the author's
website at \url{http://www.maths.ed.ac.uk/~s0571100/Instanton/}.

Finally, we will see that the algorithms are actually easily adaptable
to a larger class of computations of sheaf cohomology on more general
spaces. One such adapted algorithm will be used on our upcoming paper
\cite{gk1}, where we study sheaves on local Calabi-Yau threefolds,
such as $\Tot\bigl( \op(-1) \oplus \op(-1) \bigr)$ and $\Tot\bigl( \op
\oplus \op(-2) \bigr)$. Sheaves on Calabi-Yau threefolds are of
interest to numerous mathematicians and physicists, in particular the
aforementioned spaces appear in the context of brane theory in papers
by Dijkgraaf--Vafa and G.\ Moore.

\paragraph{Outline.} In \S\ref{sec.defs} we give the basic
definitions of the types of sheaves in whose computations we are
interested in this paper, along with some background results. We
define our two basic invariants, the \emph{width} and the
\emph{height}. The algorithms for the explicit computation of the
width and the height are described in \S\ref{sec.algwidth} and
\S\ref{sec.algheight}, respectively. Finally, we describe how to
compute similar invariants of endomorphism bundles in
\S\ref{sec.algend1} and \S\ref{sec.algend0} and how to adapt the
algorithms to other situations.

\paragraph{Acknowledgments.} The algorithms in this paper are
generalisations of the work of I.\ Swanson and E.\ Gasparim in
\cite{gs}. The author is very grateful to D.\ Grayson and M.\ Stillman
for their great work on creating \emph{Macaulay~2} and maintaining a
friendly and active user community, and to E.\ Gasparim for the
privilege of enjoyable collaboration and sage advise.

\section{Definitions and background}\label{sec.defs}

Let $Z_k$ be the total space of the line bundle
$\mathcal{O}_{\mathbb{P}^1}(-k)$ over $\mathbb{P}^1$ and $k >
0$. Denote by $\ell$ the zero section, so that $\ell^2=-k$. Let $E$ be
a holomorphic rank-$2$ vector bundle over $Z_k$ with $c_1(E)=0$. It is
known from \cite{ga2} that $E$ is an algebraic extension of algebraic
line bundles,
\begin{equation}\label{eq.Eext}
 0 \longrightarrow \mathcal{O}(-j) \longrightarrow E
   \longrightarrow \mathcal{O}(j) \longrightarrow 0 \text{ ,}
\end{equation}
where $\mathcal{O}(j)$ is the pull-back to $Z_k$ of
$\mathcal{O}_{\mathbb{P}^1}(j)$ under the projection $Z_k \to
\mathbb{P}^1$.

We fix once and for all coordinate charts $U=\{z,u\}$ and $V=\{w,v\}$
on $Z_k$, so that $w=z^{-1}$ and $v=z^ku$. (A more symmetric picture
is given by weighted homogeneous coordinates $[x_0:x_1:x_2]$ of
degrees $(1,-k,1)$, but this is less useful for explicit
computations.)

In these coordinates, an extension of the form \eqref{eq.Eext} is
given by the transition function from the $U$-chart to the $V$-chart that takes the form of the matrix
\begin{equation}\label{eq.transf}
  T = \begin{pmatrix} z^j & p(z,u) \\ 0 & z^{-j} \end{pmatrix} \text{ .}
\end{equation}
The integer $j \geq 0$ is called the \emph{splitting type} of $E$, and
the function $p$, which is holomorphic on $U \cap V$, represents an
element of $\Ext_{\mathcal{O}_{Z_k}}^1\bigl(\mathcal{O}(-j),
\mathcal{O}(j)\bigr)$. We write $[p]$ for the equivalence class of all
functions that determine isomorphic extensions, and we say that the
pair $(j,p)$ determines the bundle $E$ on $Z_k$. Hence $(j,q)$ defines
isomorphic bundles for all $q \in [p]$.

It turns out that in our case the function $p$ is always a polynomial:
\begin{proposition}[{\cite{bgk1}}]\label{prop.canp}
If $(j,p)$ determines a bundle of type \eqref{eq.Eext} on $Z_k$, then
we can choose $p$ to be a polynomial in $u$, $z$ and
$z^{-1}$. Moreover, the polynomial can be chosen to have the form
\begin{equation}\label{eq.pnormal}
  p(z,u) = \sum_{r=1}^{\bigl\lfloor\frac{2j-2}{k}\bigr\rfloor} \ \sum_{s=kr-j+1}^{j-1} p_{rs} u^r z^s \text{ ,}
\end{equation}
and if $u\bigl|p(z,u)$, the equivalence is given by simply setting all
terms that do not appear in \eqref{eq.pnormal} to zero.\qed
\end{proposition}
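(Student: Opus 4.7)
The plan is to read $[p]$ as a Čech class for the two-chart cover $\{U,V\}$ of $Z_k$ and to catalogue explicitly which monomials $u^r z^s$ lie in the coboundary.

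First, I would pin down the gauge action on $p$. An automorphism of $E$ over $U$ (respectively over $V$) that induces the identity on both the subsheaf $\mathcal{O}(-j)$ and the quotient $\mathcal{O}(j)$ of \eqref{eq.Eext} must be upper triangular with unit diagonal, i.e., of the form $\mysm{1 & c}{0 & 1}$ with $c \in \mathcal{O}(U) = \mathbb{C}[z,u]$ (respectively $\mysm{1 & c'}{0 & 1}$ with $c' \in \mathcal{O}(V) = \mathbb{C}[w,v]$). A direct matrix computation with $T$ as in \eqref{eq.transf} then gives the rule
\[
  p \;\sim\; p + c'\,z^{-j} - c\,z^{j}\text{.}
\]
That the class $[p]$ admits a polynomial representative in $u$, $z$, $z^{-1}$ in the first place comes from the algebraicity of $E$ established in \cite{ga2}.

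Next, I would expand the two coboundary pieces in coordinates $(z,u)$ on $U \cap V$, via $w = z^{-1}$ and $v = z^{k}u$. Writing $c = \sum c_{mn} z^{m} u^{n}$ and $c' = \sum c'_{mn} w^{m} v^{n}$ yields
\[
  c\,z^{j} = \sum_{m,n \geq 0} c_{mn}\,u^{n} z^{m+j}\text{,}
  \qquad
  c'\,z^{-j} = \sum_{m,n \geq 0} c'_{mn}\,u^{n} z^{kn-m-j}\text{,}
\]
so coboundaries span precisely the monomials $u^{r} z^{s}$ with $r \geq 0$ and either $s \geq j$ or $s \leq kr - j$. Their complement is the ``fundamental domain'' $\{(r,s) : r \geq 0,\ kr - j + 1 \leq s \leq j - 1\}$, which is nonempty only when $r \leq \lfloor (2j-2)/k \rfloor$. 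Hence every polynomial $p$ is equivalent to one supported in that domain, and the hypothesis $u \mid p$ kills the $r = 0$ row, leaving exactly \eqref{eq.pnormal}.

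For the final clause I would observe that when $u \mid p$, each individual monomial $p_{rs} u^{r} z^{s}$ lying outside the fundamental domain is already a coboundary in a transparent way: take $c = p_{rs} u^{r} z^{s-j}$ when $s \geq j$, or $c' = p_{rs} v^{r} w^{kr-s-j}$ when $s \leq kr - j$. Both are genuine polynomials in the appropriate chart, since the exponents are non-negative exactly under these inequalities (and $r \geq 1$ by $u \mid p$). So the passage from $p$ to its truncation to \eqref{eq.pnormal} amounts to subtracting one explicit coboundary, and no substitution or compensation is required. The only conceptually non-mechanical step is the preliminary reduction to a polynomial representative, which is supplied by \cite{ga2}; the rest is bookkeeping on exponent ranges, and the key clean feature is that $c z^{j}$ and $c' z^{-j}$ sweep out exactly disjoint ``upper'' and ``lower'' halves in $s$, leaving the finite fundamental domain in between.
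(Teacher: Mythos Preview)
The paper does not supply its own proof of this proposition: it is quoted from \cite{bgk1} and closed immediately with \qed. So there is no argument in the paper to compare against, and your task was really to reconstruct the proof from scratch.

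Your reconstruction is correct and is the standard one. The identification of the extension-class equivalence with the gauge relation $p \sim p - c z^j + c' z^{-j}$ for $c \in \mathbb{C}[z,u]$, $c' \in \mathbb{C}[w,v]$ is exactly right, and your exponent bookkeeping correctly produces the fundamental domain $kr - j + 1 \leq s \leq j - 1$, nonempty only for $r \leq \lfloor (2j-2)/k \rfloor$. The explicit coboundaries you exhibit for the truncation step are fine and justify the ``simply set to zero'' clause when $u \mid p$.

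Two small remarks. First, your closing sentence that $c z^j$ and $c' z^{-j}$ sweep out ``exactly disjoint'' upper and lower halves is not literally true: for $r \geq 2j/k$ the ranges $s \geq j$ and $s \leq kr - j$ overlap (indeed they cover every $s$, which is why the fundamental domain is empty there). This is harmless for the argument, since what you actually use is that together they cover the complement of the domain, not that they are disjoint. Second, your argument reduces a general $p$ only to the domain with $r \geq 0$; the passage to $r \geq 1$ in \eqref{eq.pnormal} genuinely needs $u \mid p$. The proposition as stated asserts the form \eqref{eq.pnormal} without that hypothesis, but the Remark immediately following makes clear that the $u \nmid p$ case is regarded as ill-posed (the splitting type drops), so this is not a gap in your treatment of the substantive content.
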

\begin{remark}
If $u$ does not divide $p$, then the extension \eqref{eq.transf}
defines a bundle $E$ that is also an extension of line bundles of a
lower splitting type. In that case the problem $(j,p)$ is
ill-posed. This does not happen if $u$ divides $p$.
\end{remark}

We want to compute explicitly from this data the so-called \emph{local
holomorphic Euler characteristic}
\[ \chi^\text{loc}(E) \ce h^0\bigl(X_k; (\pi_*E)^{\vee\vee}\bigl/\pi_*E\bigr)
   + h^0\bigl(X_k; R^1\pi_*E\bigr) \text{ ,} \] where $X_k$ is
obtained from $Z_k$ by contracting the zero-section via $\pi\colon Z_k
\to X_k$. Since $\pi\rvert_{Z_k\backslash\ell}$ is an isomorphism onto
$X_k\backslash\{0\}$, the sheaves $(\pi_*E)^{\vee\vee}\bigl/\pi_*E$
and $R^1\pi_*E$ are supported over the single point $0 \in X_k$, and
so their spaces of global sections are simply their values at $0$. In
symbols, we have
\begin{eqnarray*}
  H^0\bigl(X_k; (\pi_*E)^{\vee\vee}\bigl/\pi_*E\bigr) &=&
  \bigl((\pi_*E)^{\vee\vee}\bigl/\pi_*E\bigr)_0 \ec Q_0 \text{ , and}\\
  H^0\bigl(R^1\pi_*E\bigr) &=& \bigl(R^1\pi_*E\bigr)_0 \text{ .}
\end{eqnarray*}
To compute these stalks on $X_k$ we make heavy use of the Theorem on
Formal Functions and instead compute sections of $E$ on $Z_k$.
\begin{thm.ff}[Grauert, Grothendieck]
Let $\pi \colon Z \to X$ be a proper map of complex spaces and
$\mathcal{F}$ a coherent sheaf on $Z$. For $x \in X$ let $\ell \ce
\pi^{-1}(x)$. Then
\[ \bigl(R^i\pi_*\mathcal{F}\bigr)^\wedge_x \cong
   \varprojlim_n H^i\bigl(\ell^{(n)}; \mathcal{F}
   \bigr\rvert_{\ell^{(n)}}\bigr) \text{ ,} \]
where $\ell^{(n)}$ denotes the $n^\text{th}$ infinitesimal
neighbourhood of $\ell$ in $Z$.
\end{thm.ff}

\section{Computation of \texorpdfstring{$h^0\bigl(X_k; (\pi_* E)^{\vee\vee}
\bigl/ \pi_* E \bigr)$}{h\^{}0(pi\_*E**/pi\_*E)}}\label{sec.algwidth}

Let $E$ be a vector bundle on $Z_k$ determined by the data $(j,p)$ as
described in the previous section. We want to compute the dimension of
the vector space $Q_0 \ce \bigl((\pi_*E)^{\vee\vee} \bigl/
\pi_*E\bigr)_0$, which is the stalk at $0$ of the skyscraper sheaf $Q$
defined by the exact sequence
\[ 0 \longrightarrow \pi_* E \xrightarrow{\ \ev\ } (\pi_* E)^{\vee\vee}
   \longrightarrow Q \longrightarrow 0 \text{ .} \]
By definition, the $\mathcal{O}_{X_k}$-module structure on $\pi_*E$ is
determined by the lifting map
\begin{equation}\label{eq.lift}
  \widetilde{\pi} \colon \mathcal{O}_{X_k} \to \pi_* \mathcal{O}_{Z_k} \text{ ,}
\end{equation}
which is an isomorphism away from $0$ and whose stalk at $0$ is, in
$U$-coordinates, just $\widetilde{\pi}(w_i)=z^iu$, where
\[ \mathcal{O}_{X_k,0} \ec S = \mathbb{C}\bigl[w_0, w_1, \dotsc,
   w_k\bigr] \bigl/ \bigl(w_i w_j - w_{i+1} w_{j-1}\bigr) \text{ ,} \]
where the ideal contains all the indices $i=0, \dotsc, k-2$ and
$j=i+2,\dotsc,k$.

Thus we are lead to compute the space of sections of $E$, first as
$\mathbb{C}[z,u]$-module and then as an $S$-module. Since we are using
the Theorem on Formal Functions for the computation, we will actually
be computing the $\mathbb{C}[[z,u]]$- and $S^\wedge$-module
structures. However, on any Noetherian locally ringed space
$\bigl(X,\mathcal{O}\bigr)$ the completion $\widehat{\mathcal{O}}$ is
a flat $\mathcal{O}$-module, and $\mathcal{F}_x^\wedge = \mathcal{F}_x
\otimes_\mathcal{O} \widehat{\mathcal{O}}$ for any $\mathcal{O}$-sheaf
$\mathcal{F}$ and all $x \in X$.

The computation of $\pi_*E$ proceeds in three steps: First we apply
the Theorem on Formal Functions to express $(\pi_*E)_0$ in terms of
the cohomology of $E$, i.e.\
\[ \bigl(\pi_*E\bigr)_0^\wedge = \varprojlim_n H^0\bigl(\ell^{(n)};
   \; E^{(n)} \bigr) \text{ .} \]
In local coordinates, elements of $H^0\bigl(\ell^{(n)};\;E^{(n)}
\bigr)$ are sections of $E$ of the form $\sigma = \bigl(a(z,u),
b(z,u)\bigr)$, where $a$ and $b$ are \textit{a priori} power series in
\[ \mathcal{O}_{\ell^{(n)}}(U) = \bigoplus_{i=0}^n u^i .
   \mathbb{C}\bigl\{z\,\bigr\} \text{ .} \]
However, we require that the local section patch correctly onto the
other chart, so that $T\sigma = \bigl(z^j a + pb, z^{-j}b\bigr)$ is a
holomorphic section of $E^{(n)}(V)$, i.e.\ holomorphic in
$\bigl(z^{-1}, z^ku\bigr)$. This shows that $a$ and $b$ are in fact
polynomials, i.e.\ they contain only finitely many non-zero powers of
$z$.

\begin{remark}
We have essentially demonstrated the GAGA correspondence for the
projective schemes $\ell^{(n)}$: all holomorphic sheaves are
algebraic; and even if we start with a holomorphic section of a sheaf,
we are forced to conclude that it is algebraic. Consequently, it is
irrelevant in our computations of cohomology whether we consider
$\ell^{(n)}$ as an algebraic scheme (with the Zariski topology) or a
complex analytic space (with the Euclidean topology).
\end{remark}

For the second step, we have to show that we can compute the module
structure of $(\pi_*E))_0^\wedge$ from a finite amount of data
(essentially by only going up to a finite infinitesimal neighboorhood,
but see \S\ref{sec.sss_bounds}). To be slightly more precise, we
will \emph{not} compute $H^0\bigl(\ell^{(n)}; \; E^{(n)} \bigr)$, but
instead we will identify finitely many elements in $H^0\bigl(\ell^{(n)};
\; E^{(n)} \bigr)$ that generate $(\pi_*E))_0^\wedge$ as a
$\mathbb{C}[[z,u]]$-module. (The fact that we can do this depends
crucially on the structure of the space $Z_k$ and the fact that the
conormal bundle of $\ell \subset Z_k$ is ample.)




Finally, once we have computed
\[ M \ce H^0\bigl(\ell^{(N)};\;E^{(N)} \bigr) \]
(for some sufficiently large $N$) as a $\mathbb{C}[[z, u]]$-module,
the third and final step is to find the $S^\wedge$-module structure on
$M$ induced by the lifting map \eqref{eq.lift}. Here we exploit the
fact that $u$ is not a zero-divisor in $\mathbb{C}[[z,u]]$ and that
every element in $\mathbb{C}[[z,u]]$ can be expressed in terms of
$w_i=z^iu$ after multiplication by a sufficiently high power of $u$.

\subsection{An example}\label{sec.example}

An example is often more illuminating than a detailed theoretic
description of an algorithm, so let us start with a typical one:

\begin{example}\label{ex.mainexample}
Consider $Z_2$, the total space of $\mathcal{O}_{\mathbb{P}^1}(-2)$,
whose blow-down $X_2$ is a surface with an ordinary double point,
which for convenience we give coordinates $x=w_0=u$, $y=w_1=zu$ and
$w=w_2=z^2u$, where $xw=y^2$. Let the $E$ be the bundle on $Z_2$
determined by the extension $p(z,u)=u$ and of splitting type $j=3$. To
compute the module $M$ we are thus looking for sections $(a,b)$ of $E$
such that
\begin{equation}\label{eq.ex1}
  \begin{pmatrix} z^3 a + u b \\ z^{-3} b \end{pmatrix}
\end{equation}
is holomorphic in $z^{-1}$ and $z^2u$. Since $a$ and $b$ are
holomorphic in $(z,u)$, we can write
\begin{align*}
  a(z,u) &= \sum_{r,s\geq0} a_{rs} u^r z^s & \text{and} &&
  b(z,u) &= \sum_{r,s\geq0} b_{rs} u^r z^s \text{ .}
\end{align*}

The basic idea is to work ``one infinitesimal neighbourhood at a
time'', i.e.\ to deal with each power of $u$ separately, starting from
$0$, until one has ``enough'' information. Thus, starting at $r=0$,
we see from the second entry of \eqref{eq.ex1} that $z^{-3} b(z,u) \pmod{u}$
has to be holomorphic in $z^{-1}$, so  that
\[ b(z,u) \equiv b_{00} + b_{01}z + b_{02}z^2 + b_{03}z^3 \pmod{u} \text{ .} \]
Next, there can be no holomorphic terms in $a$ with $r=0$. Thus one
continues at $r=1$:
\[ a(z,u) \equiv (a_{10}u + a_{11}uz + a_{12}uz^2 + \dotsb) \pmod{u^2} \text{ .} \]
The term $z^3 a_{10}u$ is not holomorphic in
$\bigl(z^{-1},z^2u\bigr)$, but it is matched by the subsequent $u
b_{03}z^3$. Thus we have a relation: $a_{10} + b_{03} = 0$. There are
no further terms in $a(z,u)$ for $r=1$ that can be matched by
$u\,b(z,u)$, so $a_{1s}=0$ for $s \geq 1$.

We could now carry on to the next formal neighbourhood, find more terms for $b$
\[ b_{10} u + b_{11} uz + b_{12} uz^2 + b_{13} uz^3 + b_{14} uz^4 + b_{15} uz^5 \text{ ,} \]
and then calculate relations on $a_{2s}$. However, we shall see
immediately that this adds no new information to the $S$-module of
sections.

We must now find generators of the sections of $E$, but considered as
a module \emph{over $S$}, where $S$ is the ring
\[ S = \mathbb{C}[x,y,w]\bigl/\bigl\langle xw-y^2 \bigr\rangle \text{ .} \]
We certainly have the following generators:
\[ \beta_0=\begin{pmatrix}0\\1\end{pmatrix}\;,\quad
   \beta_1=\begin{pmatrix}0\\z\end{pmatrix},\;\quad\text{and}\quad
   \beta_2=\begin{pmatrix}0\\z^2\end{pmatrix} \text{ .} \]
But also, we must take into account the relation $a_{10} + b_{03} =
0$. Thus the last generator is
\[ \gamma = \begin{pmatrix}-u\\z^3\end{pmatrix} \text{ .} \]
Of course the module is not \emph{free} over $S$, since we have the
following relations:
\begin{align*}
  y\beta_0 &= x\beta_1 & y\beta_1 &= x\beta_2 \\
  w\beta_0 &= y\beta_1  & w\beta_1 &= y\beta_2
\end{align*}
The computation is actually complete now: Even if one were to consider
higher generators, e.g.\ the section $\beta_4 = (0,u)$ coming from the
$b_{10}$-term, it would just be a multiple of an existing generator
(here $\beta_4 = x\beta_0$). Also, the $a_{20}$-term appears to
provide a new, free generator $\alpha = (u^2,0)$, since $z^3 a_{20}
u^2$ is actually holomorphic on $V$; however, we have $\alpha =
y\beta_2 - x\gamma$.

At this stage of our guiding example we record the $S$-module of
sections that we just computed:
\[ M \ce S\bigl[\beta_0, \beta_1, \beta_2, \gamma\bigr] \bigl/ \bigl( y\beta_0 - x\beta_1 ,
   w\beta_0 - y\beta_1 , y\beta_1 - x\beta_2 , w\beta_1 - y\beta_2 \bigr) \]

We now proceed to compute $M^\vee$, $M^{\vee\vee}$ and the quotient.
The simplest, linearly independent elements of $M^\vee$ that we can
write down are
\[ \beta^\vee = \bigl\{ \beta_0 \mapsto x, \beta_1 \mapsto y, \beta_2 \mapsto w,
   \gamma \mapsto 0 \bigr\} \qquad\text{and}\qquad \gamma^\vee = \bigl\{ \beta_i
   \mapsto 0, \gamma \mapsto 1 \bigr\} \text{ .} \]
A moment's reflection shows that all other possible maps are
combinations or $S$-multiples of these two generators, and clearly
there are no relations. Thus $M^\vee$ is the free $S$-module
\[ M^\vee = S\bigl[\beta^\vee, \gamma^\vee \bigr] \text{ .} \]
The bi-dual is now simply
\[ M^{\vee\vee} = S\Bigl[\beta^{\vee\vee} = \bigl\{ \beta^\vee \mapsto 1 \bigr\},
   \gamma^{\vee\vee} = \bigl\{ \gamma^\vee \mapsto 1 \bigr\} \Bigr] \text{ .} \]
Evaluation on $M$ yields:
\begin{align*}
  \ev(\beta_0) &= \begin{Bmatrix}\beta^\vee \mapsto x \\ \gamma^\vee \mapsto 0 \end{Bmatrix} = x\beta^{\vee\vee} &
  \ev(\beta_1) &= \begin{Bmatrix}\beta^\vee \mapsto y \\ \gamma^\vee \mapsto 0 \end{Bmatrix} = y\beta^{\vee\vee} \\
  \ev(\beta_2) &= \begin{Bmatrix}\beta^\vee \mapsto w \\ \gamma^\vee \mapsto 0 \end{Bmatrix} = w\beta^{\vee\vee} &
  \ev(\gamma)  &= \begin{Bmatrix}\beta^\vee \mapsto 0 \\ \gamma^\vee \mapsto 1 \end{Bmatrix} = \gamma^{\vee\vee}
\end{align*}
So
\[ \coker\bigl(\ev\bigr) = \bigl\langle \beta^{\vee\vee} \bigr\rangle_{\mathbb{C}} \text{ ,} \]
which has dimension $1$. So $l(Q)=1$. \hfil\qed
\end{example}

\subsection{Description of the algorithm}

The example of \S\ref{sec.example} suggests a general algorithm:
We must consider two \emph{polynomials} $a$ and $b$, use the condition
that $z^j a(z,u) + p(z,u)b(z,u)$ and $z^{-j}b(z,u)$ be holomorphic in
$\bigl(z^{-1}, z^ku \bigr)$ to obtain relations on the coefficients
$a_{rs}$ and $b_{rs}$, thence create the $S$-module $M$, and finally
compute the dimension of the quotient $M^{\vee\vee}\bigl/M$.

The crucial consideration is that we only need consider \emph{finitely
many} terms in $a(z,u)$ and $b(z,u)$, and this will suffice to
describe the module structure of $M$. In other words, we guarantee
that we can choose \textit{a priori} polynomials
\[ a(z,u) = \sum_{r=0}^{A_1} \sum_{s=0}^{A_2} a_{rs}z^su^r \quad\text{and}\quad
   b(z,u) = \sum_{r=0}^{B_1} \sum_{s=0}^{B_2} b_{rs}z^su^r \text{ ,} \]
in which we treat the coefficients $a_{rs}$ and $b_{rs}$ as
indeterminates, which toghether with the finitely many relations among
them generate the module $M$. The bounds $A_1$, $A_2$, $B_1$ and $B_2$
will only depend on $k$, $j$ and $p$, and they will be determined at
the start of the algorithm. This is described in \S\ref{sec.sss_bounds}.

Following the computation of the relations among the coefficients, we
require a small, technical routine to convert the $\mathbb{C}[z,u]$-module
into an $S$-module. These technical algorithms are described at the end of
\S\ref{sec.ss_imp}. Finally, for the computation of the quotient
$M^{\vee\vee}\bigl/ M$ we use the same computational method that was
described in \cite[Lemma 2.1 (iii)]{gs}.

\subsubsection{Computation of \texorpdfstring{$M$}{M}
}\label{sec.sss_bounds}

Intuitively, it is clear that to compute $M$ one has to write down
``enough'' terms of $a$ and $b$, calculate $f \ce z^j a + p b$ and set
to zero all terms in $f$ that are not holomorphic in $z^{-1}$ and
$z^ku$. This gives a set of relations among the coefficients $a_{rs}$
and $b_{rs}$, which in turn determines a set of sections that generate
$M$. (In the example of \S\ref{sec.example}, the relation
$a_{10}+b_{03}=0$ implied the generator $\gamma=(-u,z^3)^T$.) In this
section we give precise instructions on how to find the relations
among coefficients and how to build from them a generating set of
sections.

First let us fix some notation: To each coefficient $a_{rs}$ and
$b_{rs}$, let us associate, respectively, ``elementary'' sections
\[ \sigma\bigl(a_{rs}\bigr) \ce \sigma_{rs}^a \ce \begin{pmatrix}z^s u^r \\ 0\end{pmatrix} \eqand
   \sigma\bigl(b_{rs}\bigr) \ce \sigma_{rs}^b \ce \begin{pmatrix}0 \\
z^s u^r\end{pmatrix} \text{ .} \]
Then the generator associated to a relation $R = a_{rs} + \sum_{il}
R_{il} b_{il} = 0$, where $R_{il}$ is non-zero for at least one
$(i,l)$, is $\sigma(R) \ce -\sigma_{rs}^a + \sum_{il} R_{il}
\sigma^b_{il}$. We denote by $\mathcal{R}$ the set of all such
relations, so we may consider $\mathcal{R}$ to be the ``solution set''
of the holomorphy condition $T\mysm{a}{b}\bigl/\Gamma(E;V) = 0$. With
this notation, $M$ is generated as a $\mathbb{C}[z,u]$-module by the
set $G_\mathcal{R} \ce \bigl\{ \sigma(R) : R \in \mathcal{R}\bigr\}$, and as an
$S$-module by $G'_\mathcal{R} \ce \bigl\{ \pi_* \sigma(R) : R \in \mathcal{R}\bigr\}$.

There are two problems one faces when restricting oneself to a
(finite) polynomial, which we turn into
\paragraph{Objectives for the algorithm.}
\begin{enumerate}
\item One must find all generators of $M$, i.e.\ one must ensure that
      $G_\mathcal{R}$ generates $M$. For example, on $Z_2$ with $p=0$
      and $j=4$, the $a_{20}$-term contributes a free generator
      $(u^2,0)$, which one could miss by only considering the $r=0$
      and $r=1$ infinitesimal neighbourhoods for $a$.
\item One must find all relations between $b_{r's'}$- and
      $a_{rs}$-terms. Some $b_{rs}$-terms may appear to be free when
      one does not consider enough $a_{rs}$-terms. For example, on
      $Z_2$ with $j=5$ and $p=u^2$, the term $b_{05}z^5$ may
      erroneously seem to constitute the free generator $(0,z^5)$ if
      one does not include the second infinitesimal neighbourhood
      and finds $a_{20}+b_{05} = 0$, so that the actual generator
      is $(-u^2, z^5)$.
\end{enumerate}
There exists a precise bound on the number of infinitesimal
neighbourhoods which one needs to consider. By including terms from a
higher neighbourhoods into the polynomials $a$ or $b$, one may see new
relations involving terms from lower neighbourhoods appear, but at the
same time this will add new generating terms for which one might in
turn be tempted to find new relations in even higher
neighbourhoods. However, we have \textit{a priori} bounds on the terms
in $a$ and $b$ that ensure that we compute the correct module
structure on $M$.
\begin{enumerate}\setcounter{enumi}{2}
\item It is acceptable for $\mathcal{R}$ to contain too many relations
      involving terms in $a$. This happens when there are not enough
      terms in $b$ to match. In \cite{gs} this was called a ``fake
      relation''. However, if $R\in\mathcal{R}$ is such a fake
      relation, and if by considering higher terms we would find the
      corresponding ``real'' relation to be $R'$, then we can ensure
      that $\sigma(R')$ is already contained in the module generated
      by $G_\mathcal{R}$.

      This will inevitably be the case when $p$ contains several terms
      of different degree in $u$: In that case one cannot possibly
      find all correct relations among a finite set of terms. The key
      is to allow high terms of $a$ to be set to zero ``erroneously'',
      rather than to miss a relation between a term $b_{r's'}$ and a
      term $a_{rs}$. (The latter would cause us to add
      a wrong generator, while the former only removes a potential
      generator -- but we are careful to miss only multiples of
      earlier generators.)

      We illustrate this important point by means of Example
      \ref{ex.mainexample}: Suppose we only considered $b$ up to the
      neighbourhood $r=0$, and $a$ up to $r=2$. Then we had to
      conclude the relation $R:a_{22}=0$, which is a ``fake
      relation'', whose corresponding ``real'' relation is
      $R':a_{22}+b_{15}=0$. However, $\sigma(R') = uz^2\sigma(\gamma)
      = w_2\sigma(\gamma)$, so we do not need the generator
      $\sigma(R')$.
\end{enumerate}

\noindent The range of coefficients which one needs to consider
depends on the extension $p$:
\begin{definition} Let $p \in \mathbb{C}[z,z^{-1},u]$. We define:
\begin{itemize}
\item $\minU \ce $ the minimal degree of $u$ occurring in $p$,
\item $\maxU \ce $ the maximal degree of $u$ occurring in $p$,
\item $\minZ \ce $ the minimal degree of $z$ occurring in $p$, and
\item $\maxZ \ce $ the maximal degree of $z$ occurring in $p$.
\item If $p \equiv 0$, then all the above values would be $-\infty$;
however, for this case we define $\minU \ce 0$, which will later save
us from having to consider this case separately.
\end{itemize}
\end{definition}

For a given bundle $E$ on $Z_k$ determined by $(j,p)$, there are
immediate bounds on the number of degrees of $z$ that need to be
considered for each fixed $r$:

\begin{proposition}\label{prop.bsbounds}
For any degree $r$ of $u$ and independent of $p$, only the terms
\[ b_{r0} u^r + \dotsb + b_{r,kr+j} u^r z^{kr+j} \]
occur in $b$.
\end{proposition}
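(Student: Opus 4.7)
The plan is to argue directly from the holomorphy condition on the overlap $U \cap V$. Recall that a section $(a,b)$ of $E^{(n)}$ must satisfy $T\bigl(\begin{smallmatrix}a\\b\end{smallmatrix}\bigr) \in \Gamma(E;V)$, and the second component of $T\bigl(\begin{smallmatrix}a\\b\end{smallmatrix}\bigr)$ is $z^{-j}b(z,u)$. Hence the requirement on $b$ is that $z^{-j}b(z,u)$ be holomorphic in the $V$-chart coordinates $(w,v) = (z^{-1}, z^k u)$. Notice that the condition involves only $b$ and $j$, and in particular does not see $p$; this is why the bound is independent of the extension class.

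First I would make the change of variables explicit. Writing $a$ and $b$ as polynomials on $U$ (which is justified by the GAGA-style remark following \eqref{eq.transf}), the monomials are $u^r z^s$ with $r,s\geq 0$. Using $u = w^k v$ and $z = w^{-1}$, we compute
\[
  z^{-j}\,u^r z^s \;=\; u^r z^{s-j} \;=\; v^r\, w^{\,kr+j-s}\text{ .}
\]
The right-hand side is holomorphic in $(v,w)$ if and only if the exponent of $w$ is non-negative, i.e.\ $s \leq kr + j$. This gives the asserted upper bound on $s$; the lower bound $s \geq 0$ comes for free from the requirement that $b$ be holomorphic on $U$.

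Combining the two bounds yields exactly the finite range $0 \leq s \leq kr+j$ for every fixed $r$, which is the content of the proposition. I do not expect a genuine obstacle here: the statement is really a one-line transition-function computation, and its role in the paper is simply to record the \emph{a priori} $z$-truncation level for the second component $b$, so that the truncation bounds $B_1, B_2$ alluded to after Example \ref{ex.mainexample} can later be pinned down. The only care needed is to note that the extension term $pb$ in the first component imposes no constraint on $b$ itself, which is why the bound is uniform in $p$.
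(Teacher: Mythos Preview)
Your proof is correct and follows essentially the same route as the paper: both argue that the second component $z^{-j}b$ of $T\bigl(\begin{smallmatrix}a\\b\end{smallmatrix}\bigr)$ must be holomorphic in $(z^{-1},z^ku)$, and read off the bound $s\leq kr+j$ term by term. You have merely made the change of variables $u^r z^{s-j}=v^r w^{\,kr+j-s}$ explicit where the paper leaves it implicit.
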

\begin{proof}
We require that $z^{-j}b(z,u)$ be holomorphic in $z^{-1}$ and
$z^ku$. By multiplying $\sum_{s=0}^\infty b_{rs}z^s u^r$ by $z^{-j}$
we see that the only terms that are holomorphic in $z^{-1}$ and $z^ku$
are those claimed.
\end{proof}

\begin{proposition}\label{prop.asbounds}
For any $r < \minU$, the terms in $a$ of degree $r$ in $u$, if any, are
\[ a_{r0} u^r + \dotsb + a_{r,kr-j} u^r z^{kr-j} \text{ .} \]
\end{proposition}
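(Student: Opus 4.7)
The plan is to isolate the $u^r$-component of the holomorphy condition $T\mysm{a}{b} \in \Gamma(E; V)$ and exploit the fact that $r < \minU$ forces $p \cdot b$ to contribute nothing at that $u$-degree.

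First I would write $a(z,u) = \sum_{r,s \geq 0} a_{rs} u^r z^s$ with $a_r(z) \ce \sum_{s \geq 0} a_{rs} z^s$, and similarly for $b$. The first entry of $T\mysm{a}{b}$ is $f \ce z^j a(z,u) + p(z,u)\,b(z,u)$, which must be holomorphic in the $V$-coordinates $w=z^{-1}$ and $v=z^k u$. Since every monomial of $p$ has $u$-degree at least $\minU$, the product $p \cdot b$ contributes only terms of $u$-degree $\geq \minU$. Therefore, for any $r < \minU$, the coefficient of $u^r$ in $f$ is precisely $z^j a_r(z)$.

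Next I would translate holomorphy on $V$ into a constraint on $z$-degrees. Any element of $\Gamma(E;V)$ is, on the overlap, a polynomial in $w=z^{-1}$ and $v=z^k u$; a monomial $w^\alpha v^\beta$ equals $z^{k\beta - \alpha} u^\beta$, so its $u^\beta$-coefficient has $z$-degree at most $k\beta$. Applying this with $\beta = r$ to the $u^r$-component $z^j a_r(z)$ forces every $z$-power appearing to be at most $kr$. Combined with $a_{rs} = 0$ for $s < 0$ (which is automatic from holomorphy of $a$ on $U$), this gives $0 \leq s \leq kr - j$, which is exactly the claimed range.

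There is no real obstacle here; the only subtlety is the bookkeeping involving the convention $\minU = 0$ when $p \equiv 0$. In that case the above argument still applies verbatim, since $p \cdot b = 0$ contributes nothing at any $u$-degree, so the condition on $a_r(z)$ reduces to the same $z$-degree bound. Finally, if $kr - j < 0$ the listed range is empty, which is consistent with the qualifier \emph{``if any''} in the statement.
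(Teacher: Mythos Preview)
Your proof is correct and follows essentially the same approach as the paper's: isolate the $u^r$-component, observe that $p\,b$ contributes nothing there because $r<\minU$, and then read off the $z$-degree bound from holomorphy in $(z^{-1},z^ku)$. Your version is simply more detailed, spelling out the monomial bookkeeping $w^\alpha v^\beta=z^{k\beta-\alpha}u^\beta$ and the edge cases $p\equiv0$ and $kr-j<0$, which the paper leaves implicit.
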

\begin{proof}
Since $r < \minU$, no term $a_{rs} u^r z^s$ can be combined with any
term in $pb$, so the problem reduces to making $z^j a_{rs}u^r z^s$
holomorphic in $z^ku$, which results precisely in those terms stated.
\end{proof}

\begin{proposition}\label{prop.as2bounds}
For $r \geq \minU$, the only terms $a_{rs} u^r z^s$ that can possibly
be non-zero satisfy
\[ 0 \leq s \leq \max\bigl\{ k(r-\minU) + j + \max\{\maxZ,0\},
   \ kr-j\bigr\} \text{ .} \]
\end{proposition}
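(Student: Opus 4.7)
The plan is to analyse the holomorphy requirement on $z^j a + p b$ coefficient by coefficient. Each candidate $a_{rs}$ contributes $a_{rs} u^r z^{s+j}$ to $z^j a$. If $s + j \leq kr$, this monomial is already holomorphic in $(z^{-1}, z^k u)$ and imposes no constraint; this yields $s \leq kr - j$, the second argument of the max.

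In the opposite regime $s + j > kr$, the monomial $u^r z^{s+j}$ must be cancelled in the sum $z^j a + p b$, so it must appear with potentially non-zero coefficient in $pb$. Writing $p = \sum p_{il} z^l u^i$, the coefficient of $u^r z^{s+j}$ in $p b$ is $\sum p_{il}\, b_{r-i,\, s+j-l}$, summed over pairs with $p_{il} \neq 0$, $r - i \geq 0$, and (by Proposition~\ref{prop.bsbounds}) $0 \leq s + j - l \leq k(r-i) + j$. The last inequality rearranges to $s \leq k(r-i) + l$, and since $p_{il} \neq 0$ forces $i \geq \minU$ and $l \leq \maxZ$, maximising over admissible $(i, l)$ gives $s \leq k(r - \minU) + \maxZ$.

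The stated bound $k(r - \minU) + j + \max\{\maxZ, 0\}$ is a slightly loose but convenient dominator of the tight bound above: it absorbs the non-negative $j$ and replaces $\maxZ$ by $\max\{\maxZ, 0\}$ so that the bound stays manifestly non-negative in edge cases, for instance when $p$ has only negative $z$-powers. Taking the maximum with the case-1 bound $kr - j$ and noting $s \geq 0$ (holomorphy of $a$ on $U$) yields the stated inequality.

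The hypothesis $r \geq \minU$ is what keeps the second case non-vacuous, since it makes $i = \minU \leq r$ an admissible index and so permits cancellation against a term in $pb$; below this threshold one is in the setting of Proposition~\ref{prop.asbounds}. No step poses a serious obstacle: the argument is bookkeeping the three simultaneous constraints $i \geq \minU$, $l \leq \maxZ$, and $s' \leq k r' + j$, and distilling them into a single upper bound on $s$.
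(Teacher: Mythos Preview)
Your proof is correct and follows essentially the same approach as the paper: split into the case where $z^{s+j}u^r$ is already holomorphic on $V$ (giving $s\leq kr-j$), and the case where it must be cancelled by a term of $pb$, then bound the $z$-degree reachable in $pb$ at $u$-degree $r$ using Proposition~\ref{prop.bsbounds} together with the extremal exponents of $p$. Your bookkeeping is in fact a bit more explicit than the paper's, and your observation that the tight bound is $s\leq k(r-\minU)+\maxZ$ while the stated one is deliberately slack by $j+\max\{\maxZ,0\}-\maxZ\geq0$ is accurate; the paper's phrasing ``$s$ has to run at least up to $k(r-\minU)+j$'' is slightly loose for exactly this reason, but since only an upper bound is needed for the algorithm, both versions suffice.
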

\begin{proof}
Consider all terms in $z^j a_{rs} u^r z^s $ that are not holomorphic
in $z^{-1}$ and $z^k u$: They must vanish unless they can be matched
by a term in $pb$. The only terms in $pb$ that have degree $r$ in $u$
are of the form $b_{r^\prime s^\prime} u^{r^\prime} z^{s^\prime}$,
where $r-\maxU \leq r^\prime \leq r-\minU$. Since the terms in $b$ are
as in Proposition \ref{prop.bsbounds}, $s$ has to run at least up to
$kr^\prime_\mathrm{max} + j = k(r-\minU) + j$, but the multiplication
$pb$ may have shifted the term matching $a_{rs}$ by up to
$\max(0,\maxZ)$ places up, which explains the first term in the
statement.

Secondly, terms up to $s=kr-j$ are automatically holomorphic in the
expression $z^j a$, so if $kr-j$ is greater than the previous
expression, all terms up to $kr-j$ must be considered, and all the
coefficients are free.
\end{proof}

Finally, we must turn the Objectives 1--3 into ranges for $r$ that we
choose to consider.

\begin{proposition}\label{prop.rsbounds}
By considering only a truncated generic section
\begin{equation}\label{eq.truncsection}
  \begin{pmatrix}a\\b\end{pmatrix} = \sum_{r=0\vfix}^{\minU-1\vfix}
  \ \sum_{s=0\vfix}^{kr-j\vfix} a_{rs} \begin{pmatrix}z^s u^r\\0\end{pmatrix}
  + \sum_{r=\minU\vfix}^{\alpha\vfix} \ \sum_{s=0\vfix}^{\beta\vfix} a_{rs}
  \begin{pmatrix}z^s u^r\\0\end{pmatrix} + \sum_{r=0\vfix}^{\gamma\vfix}
  \ \sum_{s=0\vfix}^{kr+j\vfix} b_{rs}\begin{pmatrix}0\\z^s u^r\end{pmatrix} \text{ ,}
\end{equation}
where
\begin{eqnarray*}
  \alpha &\ce& \max\bigl\{\lceil j/k \rceil, \maxU\bigr\} + \minU \text{ ,} \\
  \beta &\ce& \max\bigl\{kr-j, \ k(r-\minU)+j+\max\{\maxZ,0\} \bigr\} \text{ , and} \\
  \gamma &\ce& \max\bigl\{\lceil j/k \rceil, \maxU\bigr\} \text{ ,}
\end{eqnarray*}
%
%
one finds enough generators to compute the $S$-module $M$.
\end{proposition}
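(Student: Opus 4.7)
The bounds on $s$ at each fixed $r$ are exactly those of Propositions \ref{prop.bsbounds}, \ref{prop.asbounds}, and \ref{prop.as2bounds}, and the three summands of \eqref{eq.truncsection} merely reassemble them. The content of Proposition \ref{prop.rsbounds} is therefore the upper bounds $\gamma$ on $r$ for $b$-terms and $\alpha = \gamma + \minU$ on $r$ for $a$-terms, together with the claim that the associated finite set $G'_\mathcal{R}$ generates $M$ as an $S$-module.

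The whole argument rests on one translation principle: multiplication by $w_i = z^iu$, $0 \le i \le k$, sends a lattice point $(r',s')$ to $(r'+1, s'+i)$, so any product $w_{i_1}\cdots w_{i_m}$ realises a translation by $(m, i_*)$ with $i_* \ce \sum_\ell i_\ell \in [0, km]$. The plan is to show that, for every relation $R$ produced at $r > \alpha$, there is a shift $(m, i_*)$ with $m = r - \alpha$ such that the translated section represents a relation $\widetilde R$ already lying in the truncated problem; then $\sigma(R) = z^{i_*}u^m \cdot \sigma(\widetilde R)$ exhibits $\sigma(R)$ as an $S$-multiple of an element of $G'_\mathcal{R}$, and discarding $R$ loses nothing (in the spirit of Objective 3).

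I would verify the existence of such a shift in two stages. First, the $b$-indices appearing in $R$ are of the form $r_b = r - r_p$ with $r_p \in [\minU, \maxU]$, so after shifting by $m = r - \alpha$ they become $\alpha - r_p \in [\gamma + \minU - \maxU,\, \gamma]$, which lies in $[0, \gamma]$ since $\gamma \ge \maxU$; the $a$-index lands at $\alpha$. Second, a compatible $i_* \in [0, km]$ must push every $s$-coordinate into the permitted range at the reduced $r$-level. This is an interval-intersection argument of the same flavour used in the proofs of Propositions \ref{prop.bsbounds}--\ref{prop.as2bounds}, and it works because both branches of the max defining $\beta$ shift by exactly $k$ when $r$ decreases by $1$, leaving enough freedom in $[0, km]$ to accommodate $i_*$. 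Because the coefficient $R_{r_b s_b}$ of $b_{r_b, s_b}$ in $R$ depends only on $p$, not on the base point $(r,s)$, the shifted relation $\widetilde R$ is itself one of the relations of the truncated system, so $\sigma(\widetilde R) \in G'_\mathcal{R}$.

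The main obstacle is the bookkeeping in the second stage: one must verify that a single $i_*$ can simultaneously land every $s$-coordinate appearing in $R$ inside the admissible range, case-splitting on which branch of the max in $\beta$ is active and on the support of $p$. Conceptually the picture is clean: the offset $\alpha - \gamma = \minU$ is precisely the minimal ``latency'' by which $p$ can couple an $a$-level to a $b$-level, while the common summand $\max\{\lceil j/k\rceil, \maxU\}$ in $\alpha$ and $\gamma$ guarantees the reduced $b$-sections retain enough $z$-range to absorb $i_*$.
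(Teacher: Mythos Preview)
Your plan is correct and follows the same line as the paper's proof: the $s$-bounds are inherited from Propositions \ref{prop.bsbounds}--\ref{prop.as2bounds}, and the substantive content is that the $r$-bounds $\alpha$ and $\gamma=\alpha-\minU$ suffice because any relation occurring at higher $u$-degree is an $S$-multiple of one already captured. Your translation-by-$w_i$ argument is precisely the mechanism underlying the paper's terse assertion that ``(i) and (ii) follow directly from the choice of $\alpha$''; in fact you are more explicit than the paper, which gives no details at all for this step.

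One small point of alignment to watch: the paper distinguishes (ii), covering genuine relations from $\mathcal{R}_\infty$ at $r>\alpha$, from (i), covering \emph{fake} relations that the truncated computation itself produces at $r\in(\alpha,\gamma+\maxU]$ when $\maxU>\minU$ (the $a$-term and some $b$-terms are missing there). Your plan is phrased for (ii), but the same translation principle disposes of (i) as well---the partial relation is a specialisation of the shifted genuine one---so this is not a gap, just something to state explicitly when you write it up.
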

\begin{remark}
This statement contains two facts: First, we claim that our choice of
polynomials $a$ and $b$ gives enough coefficients from which we form
the generators $G_\mathcal{R}$ of $M$. Secondly, we claim that the set
$\mathcal{R}$ of relations is \emph{correct} in the following sense:
If we set $A = B = \infty$ and if $\mathcal{R}_\infty$ denotes the
associated set of relations, then one of two things happens for each
$R \in \mathcal{R}_\infty$: Either $R$ is already a relation in
$\mathcal{R}$, or $\sigma(R)$ is an $S$-multiple of $\sigma(R')$ for
some $R' \in \mathcal{R}$. (This case was illustrated in Objective~3.)
\end{remark}
\begin{proof}
Let us denote the three big sums in \eqref{eq.truncsection} by
$\Sigma_1$, $\Sigma_2$ and $\Sigma_3$ respectively from left to right.
Since the section $(a,b)$ is holomorphic on $U$, we must have
$s\geq0$, and the upper bounds for $s$ in each of the three sums is
given respectively by Propositions \ref{prop.asbounds},
\ref{prop.as2bounds} and \ref{prop.bsbounds}. To justify the choice of
the remaining bounds, consider the condition
\[ T\begin{pmatrix}a\\b\end{pmatrix} = \begin{pmatrix}z^j \, a + p(z,u)\,b
   \\ z^{-j}b\end{pmatrix} \in \Gamma(E;V) \text{ .} \]
The term $\Sigma_1$ is seen to contribute free generators of $M$, since
no term in $T \Sigma_1$ can be matched by any term from $T \Sigma_3$.

The important choice is that of the bound $\alpha$. Once this has been
chosen, we will only consider $b$ up to $u$-degree $\alpha-\minU$,
such that $p(z,u)\,b$ is matched with $a$ (this justifies Objective~3
above). This will justify the choice of $\gamma = \alpha -
\minU$. Moreover this ensures that there cannot be any generators
coming from $a$ that are erroneously considered as free. It remains to
prove that our choice of the bound $\alpha$ leads to correct
computation of the module $M$.

By construction, all the generators we get from $a$ are correct, while
the generators coming from $b$ are either correct or fake. We have to
show two things: (i) all fake relations are multiples of genuine
relations, and (ii) any relation of $M$ is a multiple of a relation
that we have already found. But both (i) and (ii) follow directly from
the choice of $\alpha$.
\end{proof}

\subsubsection{Computation of \texorpdfstring{$M^{\vee\vee}$}{M**} and \texorpdfstring{$l(Q)$}{l(Q)}}

This last section is merely included for completeness. It is no
computational obstacle to compute the dual and bi-dual of $M$:
\[ M^\vee \ce \Hom_S\bigl(M, S\bigr) \qquad\text{and}\qquad
   M^{\vee\vee} \ce \Hom_S\bigl(M^\vee, S\bigr) \text{ .} \]
The \emph{evaluation map} $\ev \colon M \hookrightarrow M^{\vee\vee}$
is the natural map given by
\[ \ev(a) \colon \phi \mapsto \phi(a) \quad \text{for all } a \in M \text{, } \phi \in M^\vee \text{.} \]
Lastly, note that dimension is invariant under completion, i.e.\ $\dim
Q_0^\wedge=\dim Q_0$, so we have $l(Q) \dim\bigl(\coker(\ev)\bigr)$.

\subsection{Implementation of the algorithm}\label{sec.ss_imp}

Our reference implementation of the algorithm is written in
\emph{Macaulay~2} \cite{M2}, a computer algebra package for
commutative algebra. The technical aspects of this implementation are
specific to that language, and the \emph{Macaulay~2}-code is available
from the author's website at \url{http://www.maths.ed.ac.uk/~s0571100/Instanton/}.
Here we present the generic part of the algorithm in pseudo-code.

\paragraph{Input and output.} The algorithm takes as input the data $(k,p,j)$,
where $k > 0$ and $j \geq 0$ are integers and $p$ is a polynomial in
$\bigl(z^{\pm1},u\bigr)$.  The main function \texttt{iWidth(k,p,j)}
computes the width $l(Q)$ for the bundle $E$ on $Z_k$ determined by
$(j,p)$.

\paragraph{Auxiliary functions.} The main function \texttt{iWidth(k,p,j)} calls
several auxiliary functions: The function
\texttt{makeSectionsAndRing(k,p,j)} creates the polynomials $a(z,u)$
and $b(z,u)$ according to Propositions \ref{prop.bsbounds},
\ref{prop.asbounds} and \ref{prop.rsbounds}. The function
\texttt{getRelations(k,fTv)} computes the relations among the
coefficients of $a$ and $b$, where $\mathtt{fTv}=z^ja(z,u) +
p(z,u)b(z,u)$. (Note that \texttt{fTv} contains all the necessary
information.) The function \texttt{makeModule} constructs the
$S$-module $M$ from the data \texttt{aPoly} and \texttt{bPoly}, which
arise respectively from $a(z,u)$ and $b(z,u)$ by applying all the
relations. (For example, if $a_{20}+b_{05}=0$ is a relation, then we
substitute $a_{20}\to b_{05}$ in $a$.) Finally, \texttt{qLength(M)}
computes $l(Q)$ from the module $M$; for its implementation we refer to $\cite{gs}$.

\paragraph{The main function.} Name: \texttt{iWidth}. Input:
\texttt{(k,p,j)}. Output: the instanton width $l(Q)$.\\\emph{Pseudo code.}\\
\begin{raggedright}
\verb!  {aPoly, bPoly, allVars} := makeSectionsAndRing(k, p, j)!\\
\verb!  fTv    := z^j * aPoly + p * bPoly!\\
\verb!  relRes := getRelations(k, fTv)!\\
\qquad  apply substitutions from \texttt{relRes} to \texttt{aPoly} and \texttt{bPoly}\\
\verb!  M      := makeModule(k, aPoly, bPoly, allVars)!\\
\verb!  return qLength(M)!
\end{raggedright}

\paragraph{Auxiliary function.} Name: \texttt{makeSectionsAndRing}. Input:
\texttt{(k,p,j)}. Output: the polynomials $a(z,u)$ and $b(z,u)$, and \texttt{allVars}, a collection of all coefficients occurring in $a(z,u)$ and $b(z,u)$.\\\emph{Pseudo code.}\\
\begin{raggedright}
\verb!  minU := ! minimal $u$-degree of $p$\\
\verb!  maxU := ! maximal $u$-degree of $p$\\
\verb!  minZ := ! minimal $z$-degree of $p$\\
\verb!  maxZ := ! maximal $z$-degree of $p$\\[\baselineskip]
\verb!  aMax    := max(ceiling(j/k), maxU) + minU!\\
\verb!  bMax    := aMax - minU!\\
\verb!  if p = 0 then ( minU = 0; bMax = 0; aMax = ceiling(j/k))!\\[\baselineskip]
\qquad generate coefficients:\\
       \quad $a_{rs}$ such that $r = 0,\dotsc, \mathtt{minU}-1$ and $s = 0,\dotsc,kr-j$;\\
       \quad $a_{rs}$ such that $r = \mathtt{minU}, \dotsc, \mathtt{aMax}$ and $s = 0, \dotsc, \max\bigl\{kr-j, k(r-\mathtt{minU}) + j + \max\{\mathtt{maxZ},0\}\bigr\}$;\\
       \quad $b_{rs}$ such that $r = 0, \dotsc, \mathtt{bMax}$ and $s = 0, \dotsc, kr+j$.\\[\baselineskip]
\verb!  aPoly   := sum(a_(r,s) z^s u^r)!\\
\verb!  bPoly   := sum(b_(r,s) z^s u^r)!\\
\verb!  allVars := !collection of all coefficients \verb!a_(r,s)! and \verb!b_(r,s)!\\[\baselineskip]
\verb!  return { aPoly, bPoly, allVars }!
\end{raggedright}

\paragraph{Main algorithm.} Name: \texttt{getRelations}. Input:
\texttt{(k,fTv)}, where $\mathtt{fTv}=z^ja(z,u) +
p(z,u)b(z,u)$. Output: A collection of relations like
$\bigl\{ a_{20}+b_{05}, \; a_{31}+b_{14}+b_{06}\bigr\}$.\\
\emph{Synopsis.} Each relation is the coefficient of a monomial
$z^s u^r$ in \texttt{fTv} for which $s>kr$.\\
\emph{Pseudo code.}\\
\begin{raggedright}
\verb!  rels   := { }!\\
\verb!  expSet := !set of exponents $(r,s)$ appearing in \texttt{fTv}\\[\baselineskip]
\verb!  for each (r,s) in expSet do!\\
\verb!    if (s <= k * r) then continue!\\
\verb!    term := !the $z^su^r$-term in \texttt{fTv}\\
\verb!    rel  := !the coefficient of \texttt{term}, scaled to be monic\\
\verb!    rels := rels + { rel }!\\
\verb!  end for!\\[\baselineskip]
\verb!  return rels!
\end{raggedright}

\paragraph{Auxiliary function.} Name: \texttt{makeModule}. Input:
\texttt{(k, aPoly, bPoly, allVars)}. Here \texttt{aPoly} and
\texttt{bPoly} are the results of substituting all the relations into
the original $a(z,u)$ and $b(z,u)$. Output: the $S$-module $M$ (e.g.\
its presentation matrix over $S$).\\
\emph{Synopsis.} Iterating over each coefficient in \texttt{allVars},
we set this coefficient to $1$ and all others to $0$ to get a section
$(a,b)$ of $E$. By multiplying with a high power of $u$ (called
\texttt{uexp}), we can express $(u^N a, u^N b)$ as a section of
$\pi_*E$, and those sections generate $M$.\\\emph{Pseudo code.}\\
\begin{raggedright}
\verb!  S       := makeRing(k)!\\
\verb!  Smodule := image! $\mysm00 \colon S^1 \to S^2$\\
\verb!  N       := !the maximum of $s-kr$ over all monomial terms $z^su^r$ in \texttt{aPoly} and \texttt{bPoly}\\
\verb!  uexp    := ceiling(N/k)!\\
\verb!  aPoly   := aPoly * u^uexp!\\
\verb!  bPoly   := bPoly * u^uexp!\\[\baselineskip]
\verb!  for each !coefficient\verb! c in allVars do!\\
\verb!    a := aPoly! with $c=1$ and all other coefficients $=0$\\
\verb!    b := aPoly! with $c=1$ and all other coefficients $=0$\\
\verb!    Smodule := Smodule + image! $\mysm{\mathtt{piStar(a,S)}}{\mathtt{piStar(b,S)}} \colon S^1 \to S^2$\\
\verb!  end do!\\[\baselineskip]
\verb!  return Smodule!
\end{raggedright}\\[.5\baselineskip]
This function calls two further auxiliary functions,
\texttt{makeRing(k)} and \texttt{piStar}. The first one,
\texttt{makeRing(k)}, returns the quotient ring $S \ce
\mathbb{C}[w_0,\dotsc,w_k] \bigl/ (w_i w_j - w_{i+1}w_{j-1})$ for
$i=0, \dotsc, k-2$ and $j=i+2, \dotsc, k$. The second function,
\texttt{piStar}, converts monomials $u^rz^s$ into monomials $\prod_i
w_i^{n_i}$ in $S$, where $\sum_i n_i = r$ and $\sum_i in_i = s$. This
is possible because we multiplied every term by the sufficiently high
power $u^\mathtt{uexp}$.

\paragraph{Auxiliary function.} Name: \texttt{piStar}. Input: \texttt{(p,S)},
where \texttt{p} is some polynomial in $u$ and $z$ in which each term
is of sufficiently high degree in $u$, and \texttt{S} is the target
ring. Output: The polynomial \texttt{p} expressed in
$w_i$-coordinates, where $w_i = z^iu$.\\
\emph{Pseudo code.}\\
\begin{raggedright}
\verb!  res := 0                              //! this will store the result\\
\verb!  k   :=! the number $k$ if \texttt{S} is $\mathbb{C}[w_0,\dotsc,w_k]\bigl/(w_iw_j-w_{i+1}w_{j-1})$ \\
\verb!                                        //! we have variables \texttt{w\_0}, \ldots, \texttt{w\_k}\\
\verb!  for each! term \verb!t in p do!\\
\verb!    degU := ! $u$-degree of \texttt{t}\\
\verb!    degZ := ! $z$-degree of \texttt{t}\\
\verb!    fctr := 1!\\
\verb!    !\\
\verb!    if degZ > k * degU then!\\
\verb!      ! error: this term is not convertible!\\
\verb!    end if!\\
\verb!    !\\
\verb!    diff := k!\\
\verb|    while (diff != 0) do|\\
\verb!      fctr := fctr * w_diff^(degZ/diff)  // degZ/diff! is integer division\\
\verb!      degU := degU - (degZ/diff)!\\
\verb!      degZ := degZ modulo diff!\\
\verb!      diff := diff - 1!\\
\verb!    end do!\\
\verb!    !\\
\verb!    fctr := fctr * w_0^degU!\\
\verb!    res  := res + fctr * (!coefficient of \texttt{t)}\\[\baselineskip]
\verb!    return res!
\end{raggedright}\\[.5\baselineskip]

At last, we need to compute the length of the module $Q$, which equals
the dimension of $M^{\vee\vee}\bigl/M$ as a $\mathbb{C}$-vector
space. The computation is performed by the function \texttt{qLength}
using a presentation matrix for $M$; the actual algorithm is precisely
the one described in \cite[Lemma 2.1 (iii)]{gs}.

\section{Computation of \texorpdfstring{$h^0\bigl(X_k; R^1{\pi_{*}}E\bigr)$}{h\^{}0(R\^{}1pi\_*E)}}\label{sec.algheight}

Let $E$ be a bundle on $Z_k$ of type \eqref{eq.Eext} determined by
$(j,p)$. The sheaf $R^1{\pi_{*}}E$ is supported at the origin, since
$\pi$ is an isomorphism everywhere else. Therefore $H^0\bigl(X_k;
R^1{\pi_{*}}E\bigr) \cong \bigl(R^1{\pi_{*}}E\bigr)_0$. The Theorem of
Formal Function gives
\[ \bigl(R^1{\pi_{*}}E\bigr)_0^\wedge = \varprojlim_n
   H^1\bigl(\ell^{(n)}; E^{(n)}\bigr) \text{ .} \]
However, this limit stabilises at a finite $n$, and so we may simply
compute the finite-dimensional vector space $H^1\bigl(Z_k; E\bigr)$;
then its dimension is the height of $E$.

In this section we present an algorithm that produces a basis for
$H^1\bigl(Z_k; E\bigr)$. For this we use the \v{C}ech description
\[ H^1\bigl(Z_k; E\bigr) = \frac{\Gamma(E; U \cap V)}{\Gamma(E;U) \oplus \Gamma(E;V)} \text{ ,} \]
so we are looking for sections of $E$ on the overlap $U \cap V$ modulo
sections on either $U$ or $V$. We recall that $U$ and $V$ are affine,
and we may consider our sheaves either as analytic sheaves over
complex spaces or as sheaves over algebraic schemes; both points of
view give the same results.

\begin{proposition}[{\cite[Lemma 2.9]{bgk1}}]
Let $E$ be determined by $(j,p)$. Then every $1$-cocycle in
$H^1\bigl(Z_k; E\bigr)$ can be represented locally over $U$ as
\begin{equation}\label{eq.cancyc}
  \sum_{r=0}^{\bigl\lfloor\small\frac{j-2}{k}\bigr\rfloor}
  \sum_{s=kr-j+1}^{-1} \begin{pmatrix}a_{rs}\\0\end{pmatrix}
  z^s u^r \text{ .}
\end{equation}
\end{proposition}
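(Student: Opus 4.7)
The plan is to work with the \v{C}ech description of $H^1$ for the two-chart cover $\{U, V\}$ of $Z_k$ given immediately above the statement. A $1$-cocycle is a pair $(a,b) \in \Gamma(E; U \cap V) = \mathbb{C}[z^{\pm 1}, u]^2$, and one must show that modulo the images of $\Gamma(E;U)$ and $\Gamma(E;V)$ in $\Gamma(E; U \cap V)$ a representative of the stated form can be chosen. Concretely, $\Gamma(E;U)$ consists of pairs $(a_U, b_U) \in \mathbb{C}[z,u]^2$, and writing a $V$-section in $U$-coordinates via $T^{-1} = \bigl(\begin{smallmatrix} z^{-j} & -p \\ 0 & z^j \end{smallmatrix}\bigr)$ identifies $\Gamma(E;V)$ with the set of pairs $(z^{-j}A - pB,\, z^j B)$ for $A, B \in \mathbb{C}[z^{-1}, z^k u]$. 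Thus $(a,b)$ is equivalent to every $(a + a_U + z^{-j}A - pB,\, b + b_U + z^j B)$ for arbitrary $a_U, b_U \in \mathbb{C}[z,u]$ and $A, B \in \mathbb{C}[z^{-1}, z^k u]$.

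My first step is to eliminate the second component entirely. Using $b_U$ I absorb into $\Gamma(E;U)$ all monomials $b_{rs} z^s u^r$ with $s \ge 0$, so I may assume $b = \sum_{r\ge 0,\, s\le -1} b_{rs} z^s u^r$. Now set $A = 0$ and $B := -b z^{-j}$. Each monomial of $B$ takes the form $-b_{rs} z^{s-j-kr}(z^k u)^r$, and because $s \le -1$ while $j, k, r \ge 0$ we have $s - j - kr \le 0$, so $B \in \mathbb{C}[z^{-1}, z^k u]$. Consequently $b + z^j B = 0$, at the cost of replacing the first component by $a' := a - pB \in \mathbb{C}[z^{\pm 1}, u]$.

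The second step normalises $a'$ using only the coboundaries that preserve $b = 0$, namely those with $b_U = 0$ and $B = 0$. This yields the equivalence $a' \sim a' + a_U + z^{-j} A$ on the first slot. Expanding $A = \sum_{i, l \ge 0} A_{il} z^{-i}(z^k u)^l$ shows that $z^{-j} A$ ranges over all monomials $z^s u^r$ with $r \ge 0$ and $s \le kr - j$, while $a_U$ ranges over those with $r \ge 0$ and $s \ge 0$. Hence every monomial of $a'$ outside the strip $kr - j + 1 \le s \le -1$ is a coboundary and can be discarded. The strip is nonempty precisely when $kr - j + 1 \le -1$, i.e.\ $r \le \lfloor(j-2)/k\rfloor$, and the surviving $a$ is of the form \eqref{eq.cancyc}.

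The principal obstacle is the coupling between the two components: a $V$-coboundary that kills $b$ perturbs $a$ by $-pB$, and a $V$-coboundary with $B \neq 0$ used after the first reduction would destroy the zero second coordinate. Performing the reductions in the order proposed circumvents this, because the cleanup of $a'$ is carried out with $b_U = 0$ and $B = 0$, which leave $b$ intact. The remaining care lies in book-keeping the range $s \le kr - j$ arising from $z^{-j} A$ and its matching with the strip $kr - j + 1 \le s \le -1$ to confirm that the only cohomologically non-trivial monomials are those listed in the canonical form.
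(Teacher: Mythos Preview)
The paper does not prove this proposition itself; it is quoted from \cite[Lemma~2.9]{bgk1} without argument. Your \v{C}ech reduction is correct and is exactly the expected proof: first killing the second component via $b_U$ and a suitable $B\in\mathbb{C}[z^{-1},z^ku]$, then normalising the first component with $a_U$ and $z^{-j}A$ while keeping $B=0$, leaves precisely the monomials in the strip $kr-j+1\le s\le-1$, $0\le r\le\lfloor(j-2)/k\rfloor$.
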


The idea is the following: The vector space $H^1\bigl(Z_k; E\bigr)$ is
certainly spanned by all the monomial cocycles $c_{rs} \ce (a_{rs}z^s
u^r,0)^T$ from Equation \eqref{eq.cancyc}, so we need to identify which
linear combinations of the $c_{rs}$ vanish in cohomology. But $c_{rs}$
vanishes in cohomology precisely if there is a function $b$
holomorphic on $U$ such that
\begin{equation}\label{eq.h1check}
  T\begin{pmatrix} a_{rs}z^s u^r \\ b \end{pmatrix} = \begin{pmatrix}
 a_{rs} z^{s+j} u^r + p b \\ z^{-j} b \end{pmatrix}
\end{equation}
is holomorphic on $V$. (Here $T$ is the transition matrix for $E$ from
Equation \eqref{eq.transf}.) Since $p$ is a polynomial, only finitely
many terms in $b$ need to be considered, and we obtain an algorithm.

First note that if $p=0$, then there can be no relations among the
$c_{rs}$, and $H^1\bigl(Z_k; E\bigr) = \langle \{ c_{rs} \}
\rangle_\mathbb{C}$.

\begin{proposition}\label{prop.h1b}
If $p \neq 0$, let $\minU$ be the smallest degree of $u$ appearing in
$p$. To obtain $H^1\bigl(Z_k; E\bigr)$, it suffices to check Equation
\eqref{eq.h1check} for polynomials of the form
\[ b(z,u) = \sum_{r=0}^{\bigl\lfloor \small\frac{j-2}{k} \bigr\rfloor
   - \minU} \sum_{s=-j}^{kr} b_{rs} z^s u^r \text{ .} \]
\end{proposition}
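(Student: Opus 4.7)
The plan is to express the coboundary condition as a family of $\mathbb{C}$-linear conditions graded by $u$-degree, and to locate precisely which coefficients of $b$ these conditions constrain. I would write $b = \sum_{r' \geq 0} b_{r'}(z)\, u^{r'}$ and $p = \sum_{\alpha \geq \minU} p_\alpha(z)\, u^\alpha$ (with $p_\minU \neq 0$ by definition of $\minU$), and then deal separately with the two components of equation \eqref{eq.h1check}. The $s$-range $-j \leq s \leq kr'$ in each $u^{r'}$-piece of $b$ falls out immediately from the second-component condition $z^{-j}b$ being holomorphic on $V$, combined with $b$'s origin as the $U$-regular partner of that $V$-section; so the real work concerns the first component.

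The key observation I would use is that holomorphy on $V$ is a $u$-graded condition: the $u^r$-piece must have only $z$-degrees $\leq kr$. Setting up the full \v{C}ech coboundary equation $c = f_U - T^{-1}f_V$ and solving the second component forces $f_{U,2} = z^j g_2$ (where $g_2$ is the second component of $f_V$ in the $V$-chart), and then the first-component condition, after absorbing $z$-degrees $\geq j$ into $z^j f_{U,1}$ and $z$-degrees $\leq kr$ into a holomorphic section on $V$, reduces to: for every $r \geq 0$, the $u^r$-piece of $z^j a - pb$ must vanish in the ``bad band'' $B_r := [kr+1,\; j-1]$. A quick arithmetic check shows $B_r$ is empty exactly when $kr + 1 > j - 1$, i.e., when $r \geq R+1$ for $R = \lfloor (j-2)/k \rfloor$.

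The canonical form \eqref{eq.cancyc} gives $(z^j a)_r \neq 0$ only for $0 \leq r \leq R$, and for such $r$ the sum $(pb)_r = \sum_{\alpha \geq \minU} p_\alpha\, b_{r-\alpha}$ involves only the $b_{r'}$ with $r' = r - \alpha \leq R - \minU$. For $r \geq R + 1$ the band $B_r$ is empty, so no constraint at all is imposed on the higher pieces of $b$; in particular $b_{r'}$ for $r' > R - \minU$ can be set to zero without breaking the coboundary condition or missing any coboundary. This gives the stated upper bound $r' \leq R - \minU$ on the $u$-degree of $b$, and combined with the $s$-range from the second component it yields the proposition.

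The main obstacle I expect is the bookkeeping around the band-emptiness inequality $kr + 1 > j - 1$ for $r \geq R+1$: this is the fact that cleanly separates ``constrained'' $u$-pieces from ``free'' ones and is what makes the truncation of $b$ safe. Once that is in place, the rest of the argument is entirely formal.
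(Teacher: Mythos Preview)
Your argument is correct and is precisely the unpacking of what the paper declares to be ``immediate from the form of $p$ in Proposition~\ref{prop.canp} and Equation~\eqref{eq.h1check}'': the $u$-graded analysis, the identification of the bad band $B_r=[kr+1,j-1]$, and the observation that $B_r=\varnothing$ once $r>\lfloor(j-2)/k\rfloor$ are exactly the steps one performs to verify the claim. The paper offers no further detail, so there is nothing to compare beyond noting that you have supplied what it omits.
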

\begin{proof}
This is immediate from the form of $p$ in Proposition \ref{prop.canp}
and Equation \eqref{eq.h1check}.
\end{proof}

\subsection{Description of the algorithm}

The algorithm itself consists of two parts: The first part computes
all the linear relations between the generators $c_{rs}$; it returns a
list of all basis elements for $H^1\bigl(Z_k; E\bigr)$ and a set of
relations, which may contain lots of redundant information.

The second part of the algorithm takes these sets of generators and
relations and reduces them to a minimal set of generators and
relations.  From this new data, we compute the dimension of
$H^1\bigl(Z_k; E\bigr)$ as the minimal number of generators minus the
minimal number of relations.

\paragraph{First part: finding relations}
\begin{enumerate}
\item Let $b$ be as in Proposition \ref{prop.h1b}, treating all the
      coefficients as indeterminates.
\item For each monomial $z^s u^r$ for $(r,s)$ in $\bigl\{ (r,s) : r = 0,
      \dotsc, \left\lfloor\frac{j-2}{k}\right\rfloor \text{ and } s =
      kr - j + 1, \dotsc, -1 \bigr\}$, do the following:
      \begin{enumerate}
      \item Let $S$ be the set of all terms in $pb$ with degree
            $(r,s)$ in $(u,z)$.
      \item If $S = \varnothing$, then $c_{rs} \ce (a_{rs}z^su^r,0)^T$
            is an independent generator of $H^1\bigl(Z_k; E\bigr)$.
      \item Otherwise, if $S$ is non-empty, let $B$ be the set of all
            coefficients $b_{il}$ appearing in $S$, and let
            $b'=\sum_{b_{il}\in B} b_{il}z^lu^i$. Note that $z^s u^r$ is
            proportional to at least one term of $pb'$ by construction.
      \item Remove from $pb'$ all terms that are proportional to $z^s u^r$
            and all terms which are holomorphic on $U$; call the result $q$.
      \item Finally, let $Q$ be the set of terms in $q$ that is
            \emph{not} holomorphic on $V$. If $Q=\varnothing$, then
            the cycle $c_{rs}$ vanishes in cohomology, otherwise we
            keep $c_{rs}$ as a non-trivial generator and obtain the
            relation $z^s u^r + \sum_{t\in Q}t = 0$.
      \end{enumerate}
\end{enumerate}

\paragraph{Implementation.} Name: \texttt{iHeight}.
Input: \texttt{(k,p,j)} corresponding to the bundle $E$ determined by
$(j,p)$ on $Z_k$. Output: a pair \texttt{(G,R)}, where \texttt{G} is a
set of monomials $t$ such that the cycles $(t,0)^T$ span a vector
space $V$, and \texttt{R} is a set of linear relations on $V$
(involving the coefficients $b_{rs}$) such that $H^1\bigl(Z_k; E\bigr)
= V\bigl/R$.

\noindent\emph{Pseudo code.}\\
\begin{raggedright}
\verb!  minU   := !minimal $u$-degree occurring in \texttt{p}\\
\verb!  if p = 0 then minU = 0!\\
\verb!  bMax   := floor((j-2)/k) - minU!\\
\qquad make all indices \texttt{b\_(r,s)} for $\mathrm{r} = 0, \dotsc,
\mathtt{bMax}$ and $\mathrm{s}=-\mathrm{j}, \dotsc, \mathrm{k*r}$\\
\verb!  bPoly  := !$\sum_{\mathrm{r},\mathrm{s}}$ \texttt{b\_(r,s) * u\^{}r * z\^{}s}\\
\verb!  pb     := bPoly * p!\\
\verb!  aList  := ! list of terms \texttt{u\^{}r*z\^{}s} for
$\mathtt{r}=0,\dotsc,\mathtt{floor((j-2)/k)}$ and $\mathtt{s}=\mathtt{k*r-j}+1,\dotsc,-1$\\
\verb!  aNonTrivials := {}            //! These two variables\\
\verb!  aRelations   := {}            //! store the final result\\[\baselineskip]
\verb!  for each aCycle in aList do!\\
\verb!    //! To begin, find all terms in \texttt{p*b} that cancel \texttt{aCycle}\\
\verb!    pbpruned := ! all terms from \texttt{pb} with the same $(z,u)$-degree as \texttt{aCycle}\\[\baselineskip]
\verb!    if (pbpruned = {}) then     //! nothing can cancel \texttt{aCycle}\\
\verb!      aNonTrivials := aNonTrivials + {aCycle}!\\
\verb!      continue!\\
\verb!    else!\\
\verb!      leftb  := ! all terms in \texttt{bPoly} that contain coefficients in \texttt{pbpruned}\\
\verb!      leftpb  := leftb * p!\\
\verb!      aCycleR := (!all terms in \texttt{leftpb} that are proportional to \verb!aCycle)!\\
\verb!      leftpb  := leftpb - aCycleR!\\
\verb!      leftpb  := leftpb - (!all terms holomorphic in \verb!(z,u))!\\[\baselineskip]
\verb!      leftovers := !those terms of \texttt{z\^{}j * leftpb} that are not holomorphic in $(z^{-1},z^ku)$\\[\baselineskip]
\verb!      if leftovers = 0 then!\\
\verb!        aNonTrivials := aNonTrivials + {aCycle}!\\
\verb!        aRelations   := aRelations + {aCycleR + leftovers}!\\
\verb!      end if!\\[\baselineskip]
\verb!    end if!\\[\baselineskip]
\verb!  end for!\\[\baselineskip]
\verb!  return (aNonTrivials, aRelations)!\\
\end{raggedright}

\paragraph{Second part: reducing to minimal generators and relations}
The first part of the algorithm produces two sets of data: a set $G$ of
generating monomials of for form $z^s u^r$ (i.e.\ the cocycle $(z^s
u^r,0)^T$ is non-trivial in $H^1\bigl(Z_k; E\bigr)$, and a set $R$ of
relations which are polynomials in $z,z^{-1},u$ with coefficients
$b_{ij}$. Let $C$ be the set of all coefficients $b_{ij}$ that can
appear; $C$ is determined by Proposition \ref{prop.h1b}. To find
minimal generators and relations, proceed as follows:
\begin{itemize}
\item Build a new set $R''$ of relations without indeterminates as
      follows: For each relation $r \in R$, for each $\beta \in C$,
      set $\beta=1$ and all other coefficients in $C \setminus
      \{\beta\}$ to zero; add the relation $r\rvert_{\beta = 1, C
      \setminus \{\beta\} = 0}$ to $R''$.
\item Build a new set of generators $G'$ and a new set of relations
      $R'$ by starting with $G'=G$ and $R'=R''$ as follows: Let $N$
      be the set of monomial relations in $R'$, i.e.\ relations of
      the form $z^s u^r=0$. For each $r \in N$, remove $r$ from $G'$
      and substitute $r=0$ into every relation in $R'$. Let $N$ be
      the new set of monomial relations in $R'$ and repeat until
      $N=\varnothing$.
\item The final set $G'$ is a minimal set of generators, and the final
      set $R'$ is a minimal set of relations.
\end{itemize}

\paragraph{Implementation in pseudo code.} Name: \texttt{fixHeightRelations}.
Input: \texttt{(G,R)}, the sets of generators and relations which the
\texttt{iHeight} algorithm produced. Output: a new pair
\texttt{(G',R')}, where \texttt{G'} is a minimal set of generators for
the vector space $H^1\bigl(Z_k; E\bigr)$, and \texttt{R'} is a new set
of linear relations, usually empty. Thus $|\mathtt{G'}|-|\mathtt{R'}|$
is the actual value of the height of $E$ (this number is also returned
in the actual implementation).\\\emph{Pseudo code.}\\
\begin{raggedright}
\verb!  if (G = {} or R = {}) then!\\
\verb!    return (G, R)!\\
\verb!  end if!\\[\baselineskip]
\verb!  rels    := {}    //! this stores the result \texttt{R}\\
\verb!  allvars := !the set of coefficients \texttt{b\_(r,s)}\\[\baselineskip]
\verb!  for each !term \verb!t in G do!\\
\verb!    for each v in allvars do!\\
\verb!      l1 := t !with \texttt{v=1} and all other variables set to zero\\
\verb[      if l1 != 0 then rels = rels + {l1}[\\
\verb!    end for!\\
\verb!  end for!\\[\baselineskip]
\verb!  prunednontrivs := G!\\
\verb!  prunedrels := rels!\\
\verb!  nullguys   := !the set of one-term relations (e.g.\ $z^3u^2=0$) in \texttt{prunedrels}\\[\baselineskip]
\verb[  while (nullguys != {}) do[\\
\verb!    for each !term t\verb! in nullguys do!\\
\verb!      !replace $(R+\mathtt{t})$ by $(R)$ in \verb!prunedrels!\\
\verb!      !replace $(R+\mathtt{t})$ by $(R)$ in \verb!prunednontrivs!\\
\verb!    end for!\\
\verb!    nullguys := !the set of one-term relations in \texttt{prunedrels}\\[\baselineskip]
\verb!  end while!\\[\baselineskip]
\verb!  return (prunednontrivs, prunedrels)!\\
\end{raggedright}

\section{Note on computing \texorpdfstring{$H^1\bigl(Z_k; \SEnd E\bigr)$}{H\^{}1(Z\_k; End E)}}\label{sec.algend1}

In the next two sections we compute invariants of the endomorphism bundle
$\SEnd(E) = E \otimes E^\wedge$. This bundle plays a fundamental role
in the deformation theory of the sheaf $E$: $H^1\bigl(Z_k; \SEnd
E\bigr)$ is precisely the tangent space at $E$ of the moduli of
holomorphic bundles diffeomorphic to $E$. This follows for example
from \cite[Proposition 6.4.3]{donkron}, as the Kuranishi map vanishes
on $Z_k$ (since $H^i\bigl(Z_k;\mathcal{F}\bigr) = 0$ for all $i\geq2$
and every coherent sheaf $\mathcal{F}$).

If $(j,p)$ determines a bundle $E$ on $Z_k$ as before, with transition
function $T$ given by Equation \eqref{eq.transf}, then the endomorphism
bundle $\SEnd E = E \otimes E^\vee$ is a rank-$4$ bundle whose transition function is given, after a convenient change of coordinates
\[ P \ce \begin{pmatrix} 0&1&0&0 \\ 1&0&0&0 \\ 0&0&0&1 \\ 0&0&1&0
   \end{pmatrix} = P^{-1} \text{ ,} \]
by
\[ S \ce P \bigl(T \otimes T^T\bigr) P = \begin{pmatrix}
   1 & z^j p & z^{-j} p & p^2 \\ 0 & z^{2j} & 0 & z^j p \\
   0 & 0 & z^{-2j} & z^{-j} p \\ 0 & 0 & 0 & 1 \end{pmatrix}
   \eqand[, \ \ so] S^{-1} = \mypm{1&-z^{-j}p&-z^jp&p^2}{0&z^{-2j}&
   0&-z^{-j}p}{0&0&z^{2j}&-z^jp}{0&0&0&1}
 \text{ .} \]
The computation of $H^1\bigl(Z_k; \SEnd E\bigr)$ is essentially the
same as for $H^1\bigl(Z_k; E\bigr)$, in the sense that there exists a
simple canonical representative for every $1$-cocycle, and the first
part of the algorithm from \S\ref{sec.algheight} translates almost
literally, while the second part remains unchanged. An implementation
of this algorithm, named \texttt{h1end(k,p,j)}, is contained in our
\emph{Macaulay~2} code at \url{http://www.maths.ed.ac.uk/~s0571100/Instanton/}.

\section{Cancelling infinities: the computation of \texorpdfstring{$H^0\bigl(Z_k;
\SEnd E\bigr)$}{H\^{}0(Z\_k; End E)}}\label{sec.algend0}

On the other hand, the space $H^0\bigl(Z_k; \SEnd E\bigr)$ is
infinite-dimensional, so we cannot compute it directly. However, if we
are given two different bundles $(j,p_1)$ and $(j,p_2)$, we can
compute a ``relative dimension'' of $H^0$-spaces as follows: For each
$n\geq0$, the space $H^0(\ell^{n}; E^{(n)}\bigr)$ is
finite-dimensional, so we can compute the difference
$\Delta_n(p_1,p_2) \ce h^0\bigl(\ell^{(n)}; E(p_1)^{(n)}\bigr) -
h^0\bigl(\ell^{(n)}; E(p_2)^{(n)}\bigr)$. Since $p_1$, $p_2$ are
polynomials, $\Delta_n$ is constant for $n \gg 0$. Finally, we can
define a function
\[ \Delta\bigl(E\bigr) \equiv h\bigl(j,p\bigr) \ce \lim_{n\to\infty} \Delta_n\bigl(0,p\bigr) \text{ ,} \]
which is non-negative since the split bundle given by $p=0$ has the
largest amount of sections on each infinitesimal neighbourhood
$\ell^{(n)}$. (See \cite{bgk1} for a discussion of this non-trivial fact.)

In the remainder of this section we describe an agorithm to compute
$h^0\bigl(\ell^{(n)}; \SEnd E^{(n)}\bigr)$ from the input data $(j,p)$
and $n$. This amounts to finding the most general section $\sigma \in
\Gamma\bigl(\SEnd E; U\bigr)$ such that $S\sigma \in \Gamma\bigr(\SEnd
E; V\bigr)$. Let $\sigma = (a,b,c,d)^T$, where we write a typical
component as $a(z,u) = \sum_{r,s\geq0} a_{rs} z^s u^r$. We have
\begin{equation}\label{eq.Sabcd}
  S\mypm abcd = \mypm{a + z^j p b + z^{-j} p c + p^2 d}{z^{2j} b
  + z^j p d}{z^{-2j} c + z^{-j} p d}d \text{ .}
\end{equation}

For computing $H^0$, we see that $d$ and we know that $z^{-j}p$ is holomorphic on $V$, thus so is $z^{-j}pd$. This implies immediately that
\[ d(z,u) = \sum_{r\geq0} \sum_{s=0}^{kr} d_{rs} z^s u^r \eqand
   c(z,u) = \sum_{r\geq0} \sum_{s=0}^{kr+2j} c_{rs} z^s u^r \text{ .}
\] Next, in the second entry of \eqref{eq.Sabcd}, $z^j p d$ contains
powers $z^s u^r$ for $s \leq kr+2j-1$, so that we may take $b(z,u) =
\sum_{r\geq0} \sum_{s=0}^{kr-1}$ (we could have taken into account the
fact that $u$ divides $p$ for an even sharper bound, but choose not
to). Finally, similar considerations show that we may take $a(z,u) =
\sum_{r\geq0} \sum_{s=0}^{kr+2j-1} a_{rs} z^s u^r$. Here we assume
that $j>0$ and that $p$ is in canonical form \eqref{eq.pnormal}; if
$j=0$, there exists only one bundle anyway.

We now describe an algorithm that computes for a given $n\geq0$ the
vector space $H^0\bigl(\ell^{(n)}; E^{(n)}\bigr)$. The algorithm
generates a set of linear relations on a larger vector space spanned
by monomials, and in our reference implementation we use a built-in
function from \emph{Macaulay~2} to compute the dimension of the
resulting space directly.

\paragraph{Implementation.} Name: \texttt{h0end}. Input: \texttt{(k,p,j,n)},
where \texttt{(k,p,j)} determine as before a bundle $E$ on $Z_k$, and
\texttt{n} is the infinitesimal neighbourhood. Output:
$h^0\bigl(\ell^{(n)}; E^{(n)}\bigr)$.\\\emph{Pseudo code.}\\
\begin{raggedright}
\verb!  aPoly := !$\sum_{r=0}^n \sum_{s=0}^{kr+2j}$ \texttt{a\_(r,s) z\^{}s u\^{}r}\\[1ex]
\verb!  bPoly := !$\sum_{r=0}^n \sum_{s=0}^{kr}$ \texttt{b\_(r,s) z\^{}s u\^{}r}\\[1ex]
\verb!  cPoly := !$\sum_{r=0}^n \sum_{s=0}^{kr+2j}$ \texttt{c\_(r,s) z\^{}s u\^{}r}\\[1ex]
\verb!  dPoly := !$\sum_{r=0}^n \sum_{s=0}^{kr}$ \texttt{d\_(r,s) z\^{}s u\^{}r}\\[1ex]
\verb!  !\\
\verb!  e1 := aPoly + z^j * p * bPoly + z^(-j) * p * cPoly + p^2 * dPoly!\\
\verb!  e2 := z^(2*j) * bPoly + z^j * p * dPoly!\\
\verb!  e3 := z^(-2*j) * cPoly + z^(-j) * p * dPoly!\\
\verb!  e4 := dPoly!\\
\verb!  !\\
\verb!  relations := {}!\\
\verb!  for each !polynomial \verb!pol in (e1, e2, e3, e4} do!\\
\verb!    s := z-degree(t)!\\
\verb!    r := u-degree(t)!\\
\verb!    for each !term \verb!t in pol do!\\
\verb!      if s <= k * r then!\\
\verb!        continue!\\
\verb!      end if!\\[\baselineskip]
\verb!      badterms  := !terms in \texttt{pol} of the same degree as \texttt{t}\\
\verb!      badcoefs  := badterms / z^s * u^r!\\
\verb!      relations := relations + {badcoefs = 0}!\\
\verb!  end for!\\
\verb!  !\\
\verb!  return! (the dimension of $\langle a_{rs},b_{rs},c_{rs},d_{rs}\rangle\bigl/$\texttt{relations})\\
\end{raggedright}

\section{Note on adapting the algorithms to other spaces}\label{sec.adapt}

To conclude this paper we outline how to adapt the algorithms from
this paper to any space $X = \Tot\bigl(\bigoplus_{i=1}^n
\op(a_i)\bigr)$ for which we have a GAGA principle and in which we can
contract the zero section. We refer to \cite{bgk1} and infer that $X$
satisfies GAGA at least when we have $a_i<0$ for all $i$, in which
case all vector bundles are algebraically filtered, and in particular
every rank-$2$ bundle is an algebraic extension of algebraic line
bundles.

The changes in the algorithm are basically as follows: We now need coordinate charts
\[ U = \bigl\{ (z, \oton{u}) \eqand V = \bigl\{(w, \oton{v})\bigr\} \]
which patch together via $w=z^{-1}$ and $v_i = z^{-a_i}u_i$. If $E$ is
a bundle on $X$ of type \eqref{eq.Eext}, then its transition function
is determined by the splitting type $j$ and a polynomial $p \in
\mathbb{C}[z^{\pm1},\oton{u}]$, which can be put into a canonical form.

\paragraph{Example.} On the Calabi-Yau threefold $W_1 \ce \Tot\bigl(\op(-1)
\oplus \op(-1)\bigr)$, the polynomial $p$ has the canonical form
\[ p(z,u,v) = \sum_{t=\epsilon}^{2j-2} \ \ \sum_{r=1-\epsilon}^{2j-2-t} \ \ \sum_{s=r+t-j+1}^{j-1} \ p_{trs} \; z^s u_1^r u_2^t \text{ , \ } \epsilon\in\{1,2\} \text{ .} \]

The putative local sections of $E$ and $\SEnd E$ that one uses in the
computation of $l(Q)$ and $h^i\bigl(X;\SEnd E\bigr)$ are now also
polynomials in $z, \oton{u}$, but otherwise the algorithms are
essentially identical. The adapted algorithms for the flop space $W_1$
are available together with the previous algorithms on the author's
website.

We wish to state a last result, on which the author hit only
after adapting the algorithm to $W_1$ and always computing zero.
\begin{proposition}[cf.\ \cite{gk1}]
Let $X$ be as above, with contractible zero section isomorphic to
$\mathbb{P}^1$, and let $\pi \colon X \to X'$ be the contraction. If
$\dim X > 2$, then for any locally free sheaf $\mathcal{E}$ on $X$,
the width of $\mathcal{E}$ vanishes, i.e.\
\[ H^0\bigl(X'; (\pi_*\mathcal{E})^{\vee\vee}\bigl/(\pi_*\mathcal{E}) \bigr) = 0 \text{ .} \]
\end{proposition}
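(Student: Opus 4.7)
\noindent\emph{Proof proposal.} The plan is to show that $\pi_*\mathcal{E}$ is already reflexive, so that the evaluation map is an isomorphism and the quotient $(\pi_*\mathcal{E})^{\vee\vee}/\pi_*\mathcal{E}$ vanishes identically. Since $\pi$ is an isomorphism on $X\setminus\ell$, the only possible support of this quotient is the singular point $0\in X'$, and we only have to verify reflexivity at that one stalk.

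First I would check that $X'$ is a normal algebraic variety; this follows from the fact that $X$ is smooth, $\pi$ has connected fibres, and $\pi_*\mathcal{O}_X = \mathcal{O}_{X'}$ by the contraction hypothesis. On a normal variety, a coherent torsion-free sheaf is reflexive if and only if it satisfies Serre's condition $S_2$; it therefore suffices to prove that $\mathrm{depth}_{\mathcal{O}_{X',0}}(\pi_*\mathcal{E})_0 \geq 2$. Torsion-freeness of $\pi_*\mathcal{E}$ kills the $H^0_{\{0\}}$-term, so the remaining issue is to show that the natural map
\[ (\pi_*\mathcal{E})_0 \longrightarrow \bigl(i_* i^* \pi_*\mathcal{E}\bigr)_0 \]
is surjective, where $i\colon X'\setminus\{0\}\hookrightarrow X'$. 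Equivalently, every section of $\mathcal{E}$ defined on the punctured neighbourhood $\pi^{-1}(U)\setminus\ell$ of $\ell$ must extend across $\ell$.

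This is where the dimension hypothesis enters: when $\dim X>2$, the zero section $\ell\simeq\mathbb{P}^1$ has codimension $\geq 2$ in the smooth variety $X$, and $\mathcal{E}$ is locally free hence $S_2$. Therefore the Hartogs-type extension property holds for $\mathcal{E}$ along $\ell$, and every section on a punctured tubular neighbourhood of $\ell$ extends uniquely across $\ell$. Pushing forward to $X'$ yields the required surjectivity, establishing depth $\geq 2$ at $0$, hence reflexivity of $\pi_*\mathcal{E}$, hence the vanishing of the width.

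The main obstacle is verifying normality of $X'$ and spelling out the local-cohomology reformulation rigorously; once those technicalities are in place, the geometric content is simply the observation that a curve in a threefold (or higher-dimensional smooth variety) has codimension $\geq 2$ and so does not obstruct extension of sections of a locally free sheaf. This contrasts sharply with the surface case $\dim X = 2$, where $\ell$ is a divisor, extension fails in general, and the width may be non-zero.
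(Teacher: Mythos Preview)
The paper does not actually prove this proposition: it is stated with a bare citation to \cite{gk1} (a paper listed as ``work in progress''), accompanied only by the remark that the author was led to it after the adapted algorithm on $W_1$ kept returning zero. So there is no argument in the paper to compare against.

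Your proof is correct and is the standard way to establish this fact. The essential point---that for $\dim X>2$ the zero section $\ell\cong\mathbb{P}^1$ has codimension $\geq 2$ in the smooth space $X$, so sections of a locally free sheaf extend across it---is exactly right, and it immediately yields $\pi_*\mathcal{E}\cong i_*i^*\pi_*\mathcal{E}$, hence reflexivity. Two small remarks: (i) torsion-freeness of $\pi_*\mathcal{E}$ is automatic since $\mathcal{E}$ is locally free on the integral scheme $X$ and $\pi$ is dominant, so you need not list it as a separate step; (ii) you do not really need to verify normality of $X'$ independently---your own Hartogs argument applied to $\mathcal{E}=\mathcal{O}_X$ already gives $S_2$ for $\mathcal{O}_{X'}=\pi_*\mathcal{O}_X$, and $R_1$ is immediate because the singular locus $\{0\}$ has codimension $\dim X'>2$. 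Neither point is a gap; the argument goes through as written.
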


\appendix

\section{Application: Rank-\texorpdfstring{$2$}{2} Bundles on \texorpdfstring{$Z_k$}{Z\_k} and genericity}

In this section we tabulate some results that were computed with the
algorithms from this paper. We will consider bundles of rank $2$ on
the surfaces $Z_k \ce \Tot\bigl(\op(-k)\bigr)$ of splitting type $j$
which are determined by a polynomial $p$ according to Equation
\eqref{eq.transf}. The moduli of such bundles has been studied in
\cite{bgk1}, and here we provide concrete examples of elements of the
moduli and their numerical invariants.

This section is divided into two cases, first the \emph{instanton
case} for which $j=nk$ for some positive integer $n$, followed by
non-instanton bundles. This terminology arises because it was shown in
\cite{gkm} and \cite{bgk1} that a holomorphic bundle of the type which
we consider corresponds to an instanton on $Z_k$ precisely when $j=nk$
(via a version of the Kobayashi-Hitchin correspondence).

\subsection{The instanton case}

In the case that $j=nk$ for some $n\in\mathbb{N}$, the bundle $E$
determined by $(j,p)$ corresponds to an instanton on $Z_k$. In this
case it was shown that the split bundle is the unique one with the
highest invariants $(h,w)$ (the height and width), while the generic
bundles are those with the lowest invariants $(h,w)$. Our computations
indicate that the condition of being generic or split also corresponds
to $h^1\bigl(\SEnd E\bigr)$ being respectively minimal or maximal.

\noindent\hfil\begin{longtable}{lllcccccl}\toprule
$k$ & $j$ & $p$ & $h^{\vphantom{\bigl(}1}\bigl(\SEnd E\bigr)$ &
$\Delta$ & $h^1-\Delta$ & $(w,h)$ \\\midrule\endhead
\multicolumn{8}{r}{{\itshape\scriptsize Continued on next page}}\\\bottomrule\endfoot
\bottomrule\endlastfoot%
$1$ & $2$ & $u$, $zu$ & $4$ & $2$ & $2$ & $(1,1)$ \\
$1$ & $2$ & $zu^2$    & $5$ & $1$ & $4$ & $(2,1)$ \\
$1$ & $2$ & $0$       & $6$ & $0$ & $6$ & $(3,1)$ \\
\midrule
$1$ & $3$ & $z^{-1}u$, $z^2u$ & $11$ & $4$ & $7$ & $(3,2)$ \\
$1$ & $3$ & $u$, $zu$         & $9$  & $6$ & $3$ & $(1,2)$ \\
$1$ & $3$ & $z^{-1}u+z^2u$    & $9$ & $6$ & $3$ & $(1,2)$ \\
$1$ & $3$ & $u^2$, $z^2u^2$   & $12$ & $3$ & $9$ & $(3,3)$ \\
$1$ & $3$ & $zu^2$            & $11$ & $4$ & $7$ & $(2,3)$ \\
$1$ & $3$ & $u^2+z^2u^2$      & $11$ & $4$ & $7$ & $(2,3)$ \\
$1$ & $3$ & $zu^3$, $z^2u^3$  & $13$ & $2$ & $11$ & $(4,3)$ \\
$1$ & $3$ & $z^2u^4$          & $14$ & $1$ & $13$ & $(5,3)$ \\
$1$ & $3$ & $0$               & $15$ & $0$ & $15$ & $(6,3)$ \\
\midrule
$2$ & $6$ & $z^{-3}u,z^5u$  & $31$ & $5$ & $26$ & $(6,7)$ \\
$2$ & $6$ & $z^{-2}u,z^4u$  & $28$ & $8$ & $20$ & $(4,6)$ \\
$2$ & $6$ & $z^{-1}u,z^3u$  & $25$ & $11$ & $14$ & $(2,5)$ \\
$2$ & $6$ & $u,z^2u$        & $24$ & $12$ & $12$ & $(1,5)$ \\
$2$ & $6$ & $zu$            & $23$ & $13$ & $10$ & $(0,5)$ \\
$2$ & $6$ & $z^{-3}u+z^2u$  & $23$ & $13$ & $10$ & $(0,5)$ \\
$2$ & $6$ & $z^{-2}u+z^3u$  & $23$ & $13$ & $10$ & $(0,5)$ \\
$2$ & $6$ & $z^{-1}u+z^4u$  & $23$ & $13$ & $10$ & $(0,5)$ \\
$2$ & $6$ & $u+z^2u$        & $24$ & $12$ & $12$ & $(1,5)$ \\
$2$ & $6$ & $z^{-1}u+z^3u$  & $24$ & $12$ & $12$ & $(1,5)$ \\
$2$ & $6$ & $z^{-2}u+z^4u$  & $24$ & $12$ & $12$ & $(1,5)$ \\
$2$ & $6$ & $z^{-1}u^2$, $z^5u^2$ & $32$ & $4$ & $28$ & $(6,8)$ \\
$2$ & $6$ & $u^2$, $z^4u^2$ & $30$ & $6$ & $24$ & $(4,8)$ \\
$2$ & $6$ & $z^{\{1,2,3\}}u^2$    & $28$ & $8$ & $20$ & $(2,8)$ \\
$2$ & $6$ & $u^2+z^4u^2$    & $28$ & $8$ & $20$ & $(2,8)$ \\
$2$ & $6$ & $\ldots u^3$    & $$ & $$ & $$ & $$ \\
$2$ & $6$ & $z^{\{3,4,5\}}u^4$    & $34$ & $2$ & $32$ & $(7,9)$ \\
$2$ & $6$ & $z^5u^5$        & $35$ & $1$ & $34$ & $(8,9)$ \\
$2$ & $6$ & $0$             & $36$ & $0$ & $36$ & $(9,9)$ \\
\midrule
$3$ & $3$ & $zu$         & $6$ & $1$ & $5$ & $(0,2)$ & & \\
$3$ & $3$ & $0$          & $7$ & $0$ & $7$ & $(1,2)$ & & \\
\midrule
$3$ & $6$ & $zu$         & $19$ & $7$ & $12$ & $(0,5)$ & & \\
$3$ & $6$ & $0$          & $26$ & $0$ & $26$ & $(5,7)$ & & \\
\midrule
$3$ & $9$ & $zu$         & $38$ & $19$ & $19$ & $(0,8)$ & & \\
$3$ & $9$ & $0$          & $57$ & $0$ & $57$ & $(12,15)$ & & \\
\midrule
$4$ & $4$ & $zu$         & $9$ & $1$ & $8$ & $(0,3)$ & & \\
$4$ & $4$ & $0$          & $10$ & $0$ & $10$ & $(1,3)$ & & \\
\midrule
$4$ & $8$ & $zu$         & $27$ & $$ & $$ & $(0,7)$ & & \\
$4$ & $8$ & $0$          & $36$ & $0$ & $36$ & $(6,10)$ & & \\
\midrule
$4$ & $12$ & $zu$         & $53$ & $$ & $$ & $(0,11)$ & & \\
$4$ & $12$ & $0$          & $78$ & $0$ & $78$ & $(15,21)$ & &
\end{longtable}\hfil

In fact, the numerical evidence leads us to conjecture one and
discover another relation between the invariants $(h,w)$ and
$(h^1,\Delta)$.

\paragraph{Conjecture:} $w+h = \chi = \bigl((h^1-\Delta)-j\bigr)\bigl/2 + j\bigl/k$.

\begin{proposition}\label{prop.h0h1}
  $\Delta + h^1 = h^1\bigl(\SEnd(\text{split})\bigr)$, or equivalently
\[ h^1\bigl(\SEnd E\bigr) - h^0\bigl(\SEnd E\rvert_{\ell^{(m)}}\bigr)
   = h^1\bigl(\SEnd(\text{split})\bigr) - h^0\bigl(\SEnd(\text{split}\rvert_{\ell^{(m)}})\bigr) \]
for all large $m$.
\end{proposition}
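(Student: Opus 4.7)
The plan is to reduce the identity to an equality of Euler characteristics on a sufficiently thick formal neighbourhood of $\ell$, and then to verify that equality by exhibiting a filtration of $\SEnd E$ whose associated graded does not depend on the extension class $p$.

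For the first step, as at the start of \S\ref{sec.algheight}, the polynomial nature of $p$ ensures that for each of $\SEnd E$ and $\SEnd(\text{split})$ the inverse system $\bigl\{H^1\bigl(\ell^{(n)}; \SEnd E|_{\ell^{(n)}}\bigr)\bigr\}_n$ stabilises once $n$ is large enough. Choose $m$ larger than both stabilisation bounds; by the Theorem on Formal Functions applied to the proper contraction $\pi$, and using that $R^1\pi_*\SEnd E$ is a skyscraper at $0$, one obtains $h^1(Z_k; \SEnd E) = h^1\bigl(\ell^{(m)}; \SEnd E|_{\ell^{(m)}}\bigr)$ and analogously for the split bundle. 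Substituting these into the equivalent form of the identity stated in the proposition and rearranging converts the claim into
\[ \chi\bigl(\SEnd E|_{\ell^{(m)}}\bigr) = \chi\bigl(\SEnd(\text{split})|_{\ell^{(m)}}\bigr) \text{ ,} \]
where $\chi \ce h^0 - h^1$.

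For the second step, I dualise the locally-free extension \eqref{eq.Eext} to obtain $0 \to \mathcal{O}(-j) \to E^\vee \to \mathcal{O}(j) \to 0$, and then tensor \eqref{eq.Eext} itself with the locally free sheaf $E^\vee$ to get a short exact sequence $0 \to E^\vee(-j) \to \SEnd E \to E^\vee(j) \to 0$. Each of $E^\vee(\pm j)$ inherits from the dualised extension a further short exact sequence, whose graded pieces are $\bigl(\mathcal{O}(-2j), \mathcal{O}\bigr)$ and $\bigl(\mathcal{O}, \mathcal{O}(2j)\bigr)$ respectively. The resulting four-step filtration of $\SEnd E$ has associated graded $\mathcal{O}(-2j) \oplus \mathcal{O}^{\oplus 2} \oplus \mathcal{O}(2j)$, and exactly this graded arises (trivially) for the split bundle. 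Because the filtration is by locally free sheaves, restriction to $\ell^{(m)}$ preserves each of the short exact sequences.

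Finally, since $\ell^{(m)}$ is proper over $\mathbb{C}$, the Euler characteristic is additive along short exact sequences of coherent sheaves on it, so both sides of the reduced equality equal the common quantity $\chi\bigl(\mathcal{O}(-2j)|_{\ell^{(m)}}\bigr) + 2\,\chi\bigl(\mathcal{O}|_{\ell^{(m)}}\bigr) + \chi\bigl(\mathcal{O}(2j)|_{\ell^{(m)}}\bigr)$, completing the argument. The conceptually subtle part — and the main potential obstacle — is the first step: one must check that a single $m$ works simultaneously for both bundles and that the Formal Functions isomorphism delivers a bona fide numerical equality of $h^1$'s (rather than merely an isomorphism of completions). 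The purely algebraic filtration argument of the second and third steps is then routine.
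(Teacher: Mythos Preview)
Your argument is correct and follows the same overall shape as the paper's proof: both reduce the identity to an equality of Euler characteristics $\chi\bigl(\SEnd E|_{\ell^{(m)}}\bigr) = \chi\bigl(\SEnd(\text{split})|_{\ell^{(m)}}\bigr)$ for large $m$, using that $h^1$ stabilises on thick enough neighbourhoods. The paper phrases this reduction via the Hilbert polynomial $\phi_{E,m}(n) = \chi\bigl(\SEnd E^{(m)}(n)\bigr)$ and then simply asserts that ``the Hilbert polynomial cannot distinguish extensions on $\ell^{(m)}$, as one sees from direct computation.'' Your four-step filtration with associated graded $\mathcal{O}(-2j)\oplus\mathcal{O}^{\oplus2}\oplus\mathcal{O}(2j)$, combined with additivity of $\chi$, is precisely the conceptual content behind that assertion, so you have made explicit what the paper leaves implicit. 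One small remark: you flag the first step (stabilisation and Formal Functions) as the ``main potential obstacle,'' but in this setting it is entirely routine --- the paper treats it as a one-line observation --- whereas the genuinely content-bearing step is the invariance of $\chi$ under extensions, which you dispatch cleanly.
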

\begin{proof}
We can express the statement in terms of the Hilbert polynomial
\[ \phi_{E,m}(n) \ce \chi\bigl(\SEnd E^{(m)}(n)\bigr) = h^1\bigl(
   \SEnd E\rvert_{\ell^{(m)}}(n)\bigr) - h^0\bigl(\SEnd E
   \rvert_{\ell^{(m)}}(n) \bigr) \text{ ;} \]
then the statement is $\phi_{E,m}(0) = \phi_{\mathrm{split},m}(0)$.
But in fact we have $\phi_{E,m}(n) = \phi_{\mathrm{split},m}(n)$ for
any $n$ and $m$, since the Hilbert polynomial cannot distinguish
extensions on $\ell^{(m)} \subset Z_k$, as one sees from direct
computation. We retain the clause ``for all large $m$'' since
$h^1\bigl(\SEnd E\rvert_{\ell^{(m)}}\bigr)$ stabilises to
$h^1\bigl(\SEnd E\bigr)$ eventually.
\end{proof}

From \cite{bgk1} and the Conjecture we get $h^1 + \Delta = n(2nk + k -
2)$ for $j=nk$. We also get that for the generic bundle $p=zu$, we
have $h^1-\Delta=(3k-2)n-2$ for $k>1$ and $h^1-\Delta=n$ for $k=1$.

\paragraph{Corollary:} Assuming the Conjecture, we have
$\Delta = n^2k-\chi$, $h^1=kn(n+1)-2n+\chi$.

\subsection{Non-instanton bundles}

Here $j\not\equiv0 \pmod{k}$.

\noindent\hfil\begin{longtable}{lllcccccl}\toprule
$k$ & $j$ & $p$ & $h^{\vphantom{\bigl(}1}\bigl(\SEnd E\bigr)$ &
$\Delta$ & $h^1-\Delta$ & $(w,h)$ \\\midrule\endhead
\multicolumn{8}{r}{{\itshape\scriptsize Continued on next page}}\\\bottomrule\endfoot
\bottomrule\endlastfoot%
$2$ & $3$ & $u$      & $7$ & $2$ & $5$ & $(1,2)$ \\
$2$ & $3$ & $zu$     & $7$ & $2$ & $5$ & $(0,2)$ \\
$2$ & $3$ & $z^2u$   & $7$ & $2$ & $5$ & $(1,2)$ \\
$2$ & $3$ & $u+z^2u$ & $7$ & $2$ & $5$ & $(0,2)$ \\
$2$ & $3$ & $z^2u^2$ & $8$ & $1$ & $7$ & $(2,2)$ \\
$2$ & $3$ & $0$      & $9$ & $0$ & $9$ & $(2,2)$ \\
\midrule
$3$ & $4$ & $u$       & $10$ & $2$ & $8$ & $(1,3)$ \\
$3$ & $4$ & $zu,z^2u$ & $10$ & $2$ & $8$ & $(0,3)$ \\
$3$ & $4$ & $z^3u$    & $10$ & $2$ & $8$ & $(1,3)$ \\
$3$ & $4$ & $u+z^3u$  & $10$ & $2$ & $8$ & $(0,3)$ \\
$3$ & $4$ & $z^3u^2$  & $11$ & $1$ & $10$ & $(2,3)$ \\
$3$ & $4$ & $0$       & $12$ & $0$ & $12$ & $(2,3)$ \\
\midrule
$3$ & $5$ & $z^{-1}u$ & $16$ & $2$ & $14$ & $(2,4)$ \\
$3$ & $5$ & $u$       & $15$ & $3$ & $12$ & $(1,4)$ \\
$3$ & $5$ & $zu,z^2u$ & $14$ & $4$ & $10$ & $(0,4)$ \\
$3$ & $5$ & $z^3u$    & $15$ & $3$ & $12$ & $(1,4)$ \\
$3$ & $5$ & $z^4u$    & $16$ & $2$ & $14$ & $(2,4)$ \\
$3$ & $5$ & $u+z^4u$  & $14$ & $4$ & $10$ & $(0,4)$ \\
$3$ & $5$ & $z^{-1}u+z^4u$ & $15$ & $3$ & $12$ & $(1,4)$ \\
$3$ & $5$ & $z^2u^2$  & $17$ & $1$ & $16$ & $(2,5)$ \\
$3$ & $5$ & $z^3u^2$  & $17$ & $1$ & $16$ & $(2,5)$ \\
$3$ & $5$ & $z^4u^2$  & $17$ & $1$ & $16$ & $(2,5)$ \\
$3$ & $5$ & $0$       & $18$ & $0$ & $0$ & $(3,5)$
\end{longtable}

We see that the invariants $(w,h)$ are now required to distinguish the
generic bundles (those with lowest $(w,h)$, wheras the split bundle is
no longer the only one with the highest values of $(w,h)$ (see
\cite{bgk1} for details). However, $h^1\bigl(\SEnd E\bigr)$ still
distinguishes the split bundle. The physical interpretation of these
``non-instanton'' bundles invites further exploration.

\section{Usage example}

The algorithms that we described in this paper are implemented in
\emph{Macaulay 2} version 1.1, the code is contained in the file
\texttt{InstantonInvariants2.m2}. Suppose we want to study the bundle
on $Z_2$ of splitting type $7$ given by the polynomial
$p=z^{-1}u+zu^2$. We set \emph{Macaulay} up as follows:

\bigskip\noindent
\begin{raggedright}
\verb!$ M2 InstantonInvariants2.m2!\\
\verb!Macaulay 2, version 1.1!\\
\verb!with packages: ...!\\
\verb!  !\\
\verb!i1 : p = z*u^2+z^-1*u;!\\
\end{raggedright}
\medskip\noindent
Now we compute the width and height, and $h^1\bigl(Z_2;\SEnd E\bigr)$. When calling \texttt{iWidth}, we use the option \texttt{Verbose=>false} to suppress additional output.
\medskip\noindent
\begin{raggedright}
\verb!i2 : iWidth(2,p,7,Verbose=>false)!\\
\verb!i3 : iHeight(2,p,7)!\\
\verb!i4 : fixHeightRelations oo!\\
\verb!i5 : h1End(2,p,7)!\\
\verb!i6 : fixHeightRelations oo!\\
\end{raggedright}
\medskip\noindent
We find that the width is $2$, the height $6$, and $h^1\bigl(Z_2;\SEnd E\bigr) = 33$.

\bigskip\bigskip\vfill

\noindent Thomas K\"{o}ppe \\
School of Mathematics, The University of Edinburgh \\
James Clerk Maxwell Building, The King's Buildings \\
Mayfield Road, Edinburgh, UK, EH9 3JZ \\
E-mail: \url{t.koeppe@ed.ac.uk}

\end{document}